\documentclass[12pt]{amsart}
\usepackage{amscd,amssymb,graphics}

\usepackage{amsfonts}
\usepackage{amsmath}
\usepackage{amsxtra}
\usepackage{latexsym}
\usepackage[mathcal]{eucal}

\usepackage{graphics,colortbl,mathrsfs}

\input xy
\xyoption{all}
\usepackage{epsfig}

\usepackage[pdftex,bookmarks,colorlinks,breaklinks]{hyperref}
\makeatletter
\@namedef{subjclassname@2020}{%
  \textup{2020} Mathematics Subject Classification}
\makeatother
\oddsidemargin 0.1875 in \evensidemargin 0.1875in
\textwidth 6 in % Width of text line.
\textheight 230mm \voffset=-4mm

\newtheorem{theorem}{Theorem}[section]
\newtheorem{fact}[theorem]{Fact}
\newtheorem{corollary}[theorem]{Corollary}
\newtheorem{lemma}[theorem]{Lemma}

\newtheorem{question}[theorem]{Question}

\numberwithin{equation}{section}

\newtheorem{claim}{Claim}

\newtheorem{case}{Case}
\theoremstyle{definition}
\newtheorem{remark}[theorem]{Remark}

\newtheorem{definition}[theorem]{Definition}

\newcommand{\ben}{\begin{enumerate}}
	\newcommand{\een}{\end{enumerate}}
\newcommand{\bit}{\begin{itemize}}
	\newcommand{\eit}{\end{itemize}}

\def\QED{\nobreak\quad\ifmmode\roman{Q.E.D.}\else{\rm Q.E.D.}\fi}

\def\pfi{\operatorname{pfi}}

	%old\title[]{Minimality properties of some topological matrix groups}
	\title[Constructing Psuedo-$\tau$-fine Groups]{Constructing Psuedo-$\tau$-fine Precompact Groups}

\author[D. Peng, G. Zhang]{Dekui Peng, Gao Zhang*}
   \address[D. Peng]
	{\hfill\break Institute of Mathematics,
		\hfill\break Nanjing Normal University, 210023,
	\hfill\break China}
\email{pengdk10@lzu.edu.cn}

	\address[G. Zhang]
	{\hfill\break School of Mathematical Sciences,
		\hfill\break Jiangsu Second Normal University, 211222,
		\hfill\break China}
	\email{gaozhang0810@hotmail.com}
\thanks{*Corresponding Author}

	\subjclass[2020]{54A25, 22A05, 22C05}
	
	\keywords{Topological Group; $G_\delta$ Set; Comfort-Ross Duality; Separable Group}
\begin{document}
	\maketitle	
\thanks{\begin{center}Dedicated to the 70th anniversary of Professor Mikhail Tkachenko\end{center}}
\begin{abstract}
Let $\tau$ be an uncountable cardinal. The notion of a \emph{$\tau$-fine} topological group was introduced in 2021. More recently, H. Zhang et al. generalized this concept by defining pseudo-$\tau$-fine topological groups to study certain factorization properties of continuous functions on topological groups. It is known that $\tau$-fineness cannot coexist with precompactness in topological groups with uncountable character. In this paper, we investigate this problem further. We prove that, in topological groups with uncountable pseudocharacter, precompactness can coexist with pseudo-$\tau$-fineness for some bounded $\tau$ but pseudocompactness can never.
\end{abstract}
%\tableofcontents

\section{Introduction}
All topological groups in this paper are assumed to be Hausdorff. 

The space of continuous real-valued functions on a topological group plays a key role not only in the theory of topological groups but also in analysis and measure theory. It was perhaps Pontryagin who first observed that every continuous function on a compact group factorizes through a continuous homomorphism onto a separable metrizable topological group.
To say that a function $ f: G \to \mathbb{R} $ factorizes through a continuous homomorphism $ \pi: G \to H $ means that there exists a continuous function $ f' $ on $ H $ such that $ f = f' \circ \pi $.

Inspired by this observation, Tkachenko introduced the notion of $ \mathbb{R} $-factorizable topological groups, which brought us closer to understanding the factorization theory of continuous functions on topological groups. A topological group $ G $ is called $ \mathbb{R} $-factorizable if every continuous function on $ G $ factorizes through a continuous homomorphism $ G \to H $ onto a separable metrizable group. For more details on $ \mathbb{R} $-factorizable topological groups, we refer the reader to \cite[Chapter 8]{AT}.

In \cite{ZPH}, a weaker version of $ \mathbb{R} $-factorizability, called \emph{$\mathscr{M}$-factorizability}, was introduced by the first listed author and his collaborators. This version is obtained by removing the separability condition on $ H $ in the definition of $ \mathbb{R} $-factorizable topological groups.

In the recent work \cite{HPTZ}, the authors introduced the concept of \emph{$\tau$-fine} topological groups\footnote{Although this article is a collaborative effort, it was Tkachenko himself who independently and insightfully defined $\tau$-fine topological groups and discovered the relationship between $\tau$-fineness and $\mathscr{M}$-factorizability.}, as defined in Definition \ref{taufine} below, where $ \tau $ is an uncountable cardinal. It has been shown that $ \tau $-fineness is deeply connected to $ \mathscr{M} $-factorizability. Building on this, Zhang et al. \cite{ZX} introduced the notion of \emph{pseudo-$ \tau $-fine} topological groups to study a weaker version of the factorization theorem.

\begin{definition}\label{taufine}
Let $\tau$ be an infinite cardinal. A topological group $G$ is called 
\begin{itemize} 
\item \emph{$\tau$-fine} if for every family of identity neighbourhoods $\{U_\alpha: \alpha\in\tau\}$, there exists a countable family $\{V_n:n\in \omega\}$ of identity neighbourhoods such that each $U_\alpha$ contains some $V_n$;
\item \emph{pseudo-$\tau$-fine} if for every family of identity neighbourhoods $\{U_\alpha: \alpha\in\tau\}$, there exists a countable family $\{V_n:n\in \omega\}$ of identity neighbourhoods such that $\bigcap_{\alpha\in\tau}U_\alpha\supseteq \bigcap_{n\in\omega}V_n$.
\end{itemize}
\end{definition}

From the definition, if a topological group is of countable character (resp. pseudocharacter), then it is automatically $\tau$-fine (resp. pseudo-$\tau$-fine) for any uncountable cardinal $\tau$.

In \cite{HPTZ}, it was proven that for any uncountable cardinal \(\tau\), every \(\tau\)-fine precompact topological group is metrizable. In this paper, we address the question of whether precompactness and pseudo-\(\tau\)-fineness can coexist in a topological group with uncountable pseudocharacter. We show that for every uncountable cardinal \(\tau \leq \mathfrak{c}^+\), there exists a precompact abelian group \(G\) of uncountable pseudocharacter such that \(\tau\) is the smallest cardinal for which \(G\) is not pseudo-\(\tau\)-fine. As a corollary, we conclude that there exists a pseudo-\(\mathfrak{c}\)-fine precompact abelian group with uncountable pseudocharacter. Additionally, we demonstrate that precompactness cannot be strengthened to pseudocompactness: for any uncountable \(\tau\), pseudocompact pseudo-\(\tau\)-fine topological groups must have countable pseudocharacter (and then it becomes metrizable and compact).

\subsection{Elementary Facts, Notation and Terminology}
We denote $\omega$ and $\omega_1$ as the first infinite and uncountable cardinal numbers, respectively; while the cardinality of the continuum, $2^\omega$, will be noted by $\mathfrak{c}$.
If $\tau$ is an infinite cardinal, we write $\tau^+$ to be its successor.
For a set $A$, $|A|$ is the cardinality of $A$. For a topological space $X$, we will let $w(X)$ to be its {\em weight}, the least cardinality of a base of $X$.
The {\em character} or {\em local weight} of a point $x$ in $X$, which is denoted by $\chi(x)$ (or $\chi(x,X)$), is the least cardinality of a base of $X$ at the point $x$.
If the space $X$ is homogeneous, in particular, if $X$ is a topological group, then all points have the same character; we then call this cardinal the character of $X$ and denote it by $\chi(X)$.
A similar notion is the so-called {\em pseudocharacter}.
The pseudocharacter $\psi(x)$ of $x$ in $X$ is the least cardinal $\tau$ such that there exists a family of open neighbourhoods of $x$ whose intersection is the singleton $\{x\}$.
When $X$ is homogeneous, $\psi(X)$ is defined by a similar way.
For any infinite cardinal $\tau$, a $G_\tau$ set is the intersection of a family of open sets of cardinality at most $\tau$; while $G_\omega$ sets are known as $G_\delta$ sets.
So for a topological group $G$, $\psi(G)\leq \tau$ if and only if $\{1_G\}$ a $G_\tau$ set.
The {\em density} or {\em density character} is the least cardinal of a dense subset of $X$; this cardinal will be noted by $d(X)$ in this paper.

Using the above notion, pseudo-$\tau$-fine topological groups can be defined as the groups $G$ such that every $G_\tau$ set $A\ni 1_G$ contains a $G_\delta$ set $B\ni 1_G$.
It is well-known that any open neighbourhood $U$ of $1_G$ in a topological group $G$ contains a closed subgroup $N$ of $G$ such that the left coset space $G/N$ is of countable pseudocharacter.
So a $G_\tau$ set containing $1_G$, as an intersection of at most $\tau$ many open neighbourhood of $1_G$, always contains a closed subgroup $N$ such that $\psi(G/N)\leq \tau$.
Then we have:
\begin{fact}\label{Fact1}
Let $\tau$ be an uncountable cardinal. A topological group $G$ is pseudo-$\tau$-fine if and only if every closed subgroup $M$ of $G$ satisfying $\psi(G/M) \leq \tau$ contains a closed subgroup $N$ such that $\psi(G/N) \leq \omega$.
\end{fact}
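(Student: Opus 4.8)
The plan is to reduce both implications to statements about $G_\kappa$ sets and closed subgroups, using the characterization of pseudo-$\tau$-fineness recorded just before the statement together with the ``well-known fact'' and its displayed extension. First I would isolate the bridging equivalence that does all the work: for a closed subgroup $H$ of $G$ and an infinite cardinal $\kappa$, one has $\psi(G/H)\leq\kappa$ if and only if $H$ is a $G_\kappa$ set of $G$ containing $1_G$. The left-to-right half is immediate: if $\{W_\alpha:\alpha\in\kappa\}$ is a family of open neighbourhoods of the distinguished coset in $G/H$ with intersection $\{H\}$, then pulling back along the continuous open quotient map $\pi\colon G\to G/H$ gives $H=\pi^{-1}(\{H\})=\bigcap_{\alpha\in\kappa}\pi^{-1}(W_\alpha)$, a $G_\kappa$ set through $1_G$. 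The right-to-left half is precisely the content of the extension of the well-known fact: a $G_\kappa$ set containing $1_G$ contains a closed subgroup $N$ with $\psi(G/N)\leq\kappa$.

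For the forward direction, assume $G$ is pseudo-$\tau$-fine and let $M$ be a closed subgroup with $\psi(G/M)\leq\tau$. By the easy half above, $M$ is a $G_\tau$ set containing $1_G$, so pseudo-$\tau$-fineness yields a $G_\delta$ set $B$ with $1_G\in B\subseteq M$. Applying the well-known fact to $B$ (a $G_\omega$ set) produces a closed subgroup $N$ with $N\subseteq B$ and $\psi(G/N)\leq\omega$; since $N\subseteq B\subseteq M$, this is the required subgroup.

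For the reverse direction, assume the subgroup condition holds and let $A\ni 1_G$ be an arbitrary $G_\tau$ set; I must interpose a $G_\delta$ set between $1_G$ and $A$. By the extension of the well-known fact, $A$ contains a closed subgroup $M$ with $\psi(G/M)\leq\tau$. The hypothesis then supplies a closed subgroup $N\subseteq M$ with $\psi(G/N)\leq\omega$, and by the easy half of the equivalence $N$ is a $G_\omega=G_\delta$ set containing $1_G$. As $N\subseteq M\subseteq A$, the set $A$ contains a $G_\delta$ set through $1_G$, so $G$ is pseudo-$\tau$-fine.

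The only genuinely delicate point is the nontrivial half of the bridging equivalence, because $H$ need not be normal and one works with left coset spaces rather than quotient groups. Concretely, when one extracts $N=\bigcap_{\alpha\in\kappa}N_\alpha$ from the subgroups $N_\alpha\subseteq U_\alpha$ attached to the defining neighbourhoods $U_\alpha$ of the $G_\kappa$ set, the main obstacle will be to check that this $N$ actually satisfies $\psi(G/N)\leq\kappa$. I would avoid any normality assumption by pulling the countable pseudobases of the $G/N_\alpha$ back to $G/N$ along the well-defined continuous maps $gN\mapsto gN_\alpha$ (well-defined since $N\subseteq N_\alpha$): the resulting $\kappa$ many open sets of $G/N$ have common intersection exactly the identity coset, giving $\psi(G/N)\leq\kappa\cdot\omega=\kappa$. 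Everything else is routine bookkeeping with the two facts already established.
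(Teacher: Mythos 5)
Your proof is correct and follows essentially the same route as the paper, which states this fact as an immediate consequence of the paragraph preceding it (the reformulation of pseudo-$\tau$-fineness via $G_\tau$ and $G_\delta$ sets containing $1_G$, the well-known fact that every open neighbourhood of $1_G$ contains a closed subgroup $N$ with $\psi(G/N)\leq\omega$, and its $G_\tau$ extension); your verification that $N=\bigcap_{\alpha\in\kappa}N_\alpha$ satisfies $\psi(G/N)\leq\kappa$ by pulling back countable pseudobases along the maps $gN\mapsto gN_\alpha$ is precisely the detail the paper leaves implicit. One cosmetic caveat: your ``bridging equivalence'' is announced as an ``if and only if,'' but the right-to-left statement you actually establish and use is the weaker (and correct) one --- a $G_\kappa$ set containing $1_G$ contains \emph{some} closed subgroup $N$ with $\psi(G/N)\leq\kappa$, not that a closed subgroup $H$ which happens to be a $G_\kappa$ set must itself satisfy $\psi(G/H)\leq\kappa$ --- so the equivalence should be phrased accordingly, though this does not affect any step of your argument.
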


A topological group is called {\em precompact} if its Weil completion is a compact group. 
The following conclusion about precompact groups will be frequently used throughout this paper and will no longer be cited specifically.

\begin{fact}\cite{AT}
An infinite precompact group $G$ has the same character and weight, i.e., $\chi(G)=w(G)$.
\end{fact}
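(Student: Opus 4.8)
The plan is to prove the two inequalities $\chi(G)\le w(G)$ and $w(G)\le\chi(G)$ separately. The first is automatic for any topological space: from a base of minimal cardinality, the members containing a fixed point form a local base there, so $\chi(G)\le w(G)$. The whole content therefore lies in the reverse inequality $w(G)\le\chi(G)$, and here I would exploit that $G$ embeds as a dense subgroup of its Weil completion $K$, a compact group (note $K$ is infinite, so all the cardinals below are infinite).

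I would first establish the statement for the compact group $K$ itself, namely $w(K)\le\chi(K)$; this is the main step. Write $\kappa=\chi(K)$ and fix a local base $\{U_\alpha:\alpha<\kappa\}$ at the identity. The key ingredient is the structure theorem for compact groups: every neighbourhood of $1_K$ contains a closed normal subgroup $N$ with $K/N$ a compact Lie group, hence metrizable (a consequence of the Peter--Weyl theorem). Applying this inside each $U_\alpha$ yields closed normal subgroups $N_\alpha\subseteq U_\alpha$ with each $K/N_\alpha$ metrizable, so $\bigcap_\alpha N_\alpha\subseteq\bigcap_\alpha U_\alpha=\{1_K\}$. The diagonal map $x\mapsto(xN_\alpha)_{\alpha<\kappa}$ is then a continuous injection of $K$ into $\prod_{\alpha<\kappa}K/N_\alpha$; since $K$ is compact and the target Hausdorff, it is a topological embedding. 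As each factor is metrizable, $w(K/N_\alpha)=\omega$, so the product of $\kappa$ such factors has weight at most $\kappa$. Hence $w(K)\le\kappa=\chi(K)$, and the reverse inequality being trivial, $w(K)=\chi(K)$.

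It remains to transfer this to the dense subgroup $G$, for which I would chain the inequalities
\[
w(G)\ \le\ w(K)\ =\ \chi(K)\ \le\ \chi(G).
\]
The first step holds because $G$ is a subspace of $K$; the middle equality is the compact case just proved. For the last step I would show $\chi(K)\le\chi(G)$ directly: given a local base $\{V_\beta\}$ at $1_G$ of size $\chi(G)$, write $V_\beta=O_\beta\cap G$ with $O_\beta$ open in $K$ and set $W_\beta=\operatorname{int}_K\overline{O_\beta}^{\,K}$. Each $W_\beta$ contains $O_\beta$, hence the identity; and given any open $U\ni 1_K$, regularity of $K$ (equivalently, the usual fact that $\overline{\tilde W}\subseteq\tilde W^2$ in a topological group) furnishes a symmetric open $\tilde W$ with $\overline{\tilde W}^{\,K}\subseteq U$, after which some $V_\beta\subseteq\tilde W\cap G$. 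Since $G$ is dense and $O_\beta$ open, $\overline{O_\beta}^{\,K}=\overline{V_\beta}^{\,K}\subseteq\overline{\tilde W}^{\,K}\subseteq U$, so $W_\beta\subseteq U$. Thus $\{W_\beta\}$ is a local base at $1_K$ and $\chi(K)\le\chi(G)$. Combining the displayed chain with the trivial $\chi(G)\le w(G)$ yields $\chi(G)=w(G)$.

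I would expect the only genuine obstacle to be the invocation of the Peter--Weyl structure theorem in the compact case; everything else is a routine manipulation of bases and closures in a dense subgroup. An alternative, equivalent route is to prove the general identity $w(G)=\chi(G)\cdot d(G)$ for topological groups (translating a local base by a dense set produces a base) and then verify $d(G)\le\chi(G)$ for precompact $G$, but this reduction again rests on the same compact-group input, so I would favour the embedding argument above for its directness.
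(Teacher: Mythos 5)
Your argument is correct in all its steps: the Peter--Weyl structure theorem does give, inside every identity neighbourhood of the compact completion $K$, a closed normal $N$ with $K/N$ a compact Lie group; the diagonal map into $\prod_{\alpha<\kappa}K/N_\alpha$ is injective because a local base at $1_K$ has intersection $\{1_K\}$ in a Hausdorff space, and compactness upgrades it to an embedding, giving $w(K)\le\chi(K)$; and your interior-of-closure argument correctly shows $\chi(K)\le\chi(G)$ for the dense subgroup $G$, closing the chain $w(G)\le w(K)=\chi(K)\le\chi(G)\le w(G)$. Note, however, that the paper itself offers no proof of this statement---it is quoted as a Fact from \cite{AT}---so there is no internal argument to compare against; yours is the standard completion-plus-Peter--Weyl route found in the literature. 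One genuine inaccuracy occurs in your closing remark: the alternative route via $w(G)=\chi(G)\cdot d(G)$ does \emph{not} ``rest on the same compact-group input.'' Total boundedness alone yields $d(G)\le\chi(G)$: for each $U$ in a local base $\mathcal{B}$ at $1_G$ (which we may take symmetric), pick a finite set $F_U$ with $F_U U=G$; given $x$ and a basic neighbourhood $xW$, choosing $U\in\mathcal{B}$ with $U\subseteq W$ and writing $x=fu$ with $f\in F_U$, $u\in U$, gives $f\in xU^{-1}\subseteq xW$, so $\bigcup_{U\in\mathcal{B}}F_U$ is dense of cardinality at most $\chi(G)$. Combined with the elementary identity $w=\chi\cdot d$ for topological groups (translates of a local base by a dense set form a base), this proves the Fact with no appeal to Peter--Weyl at all; so the alternative you set aside is actually the more elementary of the two, while your embedding argument has the merit of isolating the compact case as a statement of independent interest.
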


The category $\mathbf{PCA}$ of precompact abelian groups (with continuous homomorphisms) satisfies a nice duality theorem, firstly discovered by Comfort and Ross \cite{CR}.
For any precompact abelian group $G$, we denote by $G^*$ the group of all continuous homomorphisms $G\to \mathbb{T}$ with the pointwise convergence topology, where $\mathbb{T}$ is the circle group, i.e., the 1-dimensional torus.

\begin{fact}\label{Fact2}\cite{Peng1,Peng2}
Let $G$ be a precompact abelian group. Then:
\begin{itemize}
    \item[(1)] $G^*$ is dense in $\widehat{G_d}$, where $G_d$ is the group $G$ endowed with the discrete topology and $\widehat{G_d}$ is the Pontryagin dual group of $G_d$.
    \item[(2)] $(G^*)^* \cong G$.
    \item[(3)] For every closed subgroup $H$ of $G$, the subgroup $H^\perp := \{f \in G^* : f(H) = \{0\}\}$ is closed in $G^*$, and $H^* \cong G^* / H^\perp$.
    \item[(4)] $|G^*| = w(G)$.
    \item[(5)] $d(G^*) = \psi(G)$.
\end{itemize}
\end{fact}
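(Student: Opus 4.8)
My plan is to deduce all five items from two classical inputs: Pontryagin duality for the compact completion $bG$ of $G$, and the Comfort--Ross correspondence \cite{CR} between precompact Hausdorff group topologies on an abelian group and the dense (equivalently, point-separating) subgroups of the compact group $\widehat{G_d}$. The guiding observation is that every continuous character of $G$ extends uniquely to a continuous character of $bG$, so that as abstract groups $G^* = \widehat{bG}$, while the pointwise-convergence topology on $G^*$ is the weak topology $\sigma(G^*, G)$ induced by the evaluations coming from $G$.

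First I would dispose of (1) and (4). Since $bG$ is compact abelian, $\widehat{bG}$ is discrete and $w(bG) = |\widehat{bG}|$; because $G$ is dense in $bG$ we have $w(G) = w(bG)$, and the identification $G^* = \widehat{bG}$ gives $|G^*| = w(G)$, which is (4). For (1), view $G^*$ as a subgroup of $\widehat{G_d}$, whose Pontryagin dual is $G_d$. By duality a subgroup of the compact group $\widehat{G_d}$ is dense precisely when its annihilator in $G_d$ is trivial; that annihilator is $\{g \in G : \chi(g)=0 \text{ for all } \chi\in G^*\}$, which reduces to $\{1_G\}$ because $G$ is Hausdorff and hence its continuous characters separate points. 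This gives (1).

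Next I would establish the reflexivity (2), which I expect to be the crux. Writing $D = G^*$ as a dense subgroup of $\widehat{G_d}$, the pointwise topology on $D$ is $\sigma(D, G)$, where each $g \in G$ acts by evaluation as an element of $\widehat{D_d}$; since $D$ separates the points of $G$, conversely $G$ separates the points of $D$, so $G$ embeds as a point-separating, hence dense, subgroup of $\widehat{D_d}$. The Comfort--Ross correspondence then identifies $(D,\sigma(D,G))$ as a precompact group whose group of continuous characters is exactly this copy of $G$, and --- this is the delicate point --- guarantees that the evaluation map $G \to (G^*)^*$ is not merely an algebraic isomorphism but a homeomorphism, yielding (2). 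For (3), the set $H^\perp = \bigcap_{h\in H}\{f : f(h)=0\}$ is an intersection of pointwise-closed sets, hence closed. The restriction homomorphism $r : G^* \to H^*$ has kernel $H^\perp$, and its surjectivity is the statement that every continuous character of the closed subgroup $H$ extends to $G$: passing to completions, $\overline{H}$ is a compact subgroup of $bG$, the restriction $\widehat{bG}\to\widehat{\overline{H}}$ is onto by Pontryagin duality for compact groups, and restricting an extension back to $G$ gives the desired character. The remaining work is to check that the quotient topology on $G^*/H^\perp$ coincides with the pointwise topology on $H^*$, which again follows from the Comfort--Ross description of the topologies.

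Finally I would derive (5) from (2) and (3). Unwinding the definition, $\psi(G)$ is the least cardinality of a subfamily $S \subseteq G^*$ with $\bigcap_{\chi\in S}\ker\chi = \{1_G\}$, since a $G_\tau$ set at $1_G$ can be refined to the kernels of $\tau$ characters and vice versa. By the annihilator duality of (3) applied inside $(G^*)^* \cong G$, such an $S$ separates the points of $G$ exactly when the closed subgroup it generates is all of $G^*$, i.e. when $\langle S\rangle$ is dense in $G^*$. Hence the least size of a point-separating $S$ equals the least size of a subset generating a dense subgroup, which for infinite cardinals is just $d(G^*)$; this proves (5). The main obstacle throughout is topological rather than algebraic: the nontrivial content of Comfort--Ross lies in matching the pointwise-convergence topologies on the dual and double dual with the original topology on $G$, so the care in (2) and in the final topological identification in (3) is where the real work sits.
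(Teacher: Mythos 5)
Your proposal is correct, but there is nothing in the paper to compare it against step by step: the paper offers no proof of this Fact, quoting it from \cite{Peng1,Peng2}, with the underlying duality theory attributed to Comfort and Ross \cite{CR}. What you have done is reconstruct the cited results from two classical inputs --- Pontryagin duality for the compact completion $bG$ (every continuous character of $G$ extends uniquely, so $G^*\cong\widehat{bG}$ algebraically) and the Comfort--Ross theorem that a precompact Hausdorff group topology coincides with the weak topology generated by its own continuous characters. Granting those, your reductions are sound: (1) and (4) follow from the annihilator criterion for density together with $w(G)=w(bG)=|\widehat{bG}|$; (2) is precisely the Comfort--Ross theorem, and you rightly flag the homeomorphy of the evaluation map as the nontrivial cited content rather than a formality; in (3), your extension argument (extend a character of $H$ to $\overline{H}\leq bG$, then to $bG$ by duality for compact groups) correctly gives surjectivity of the restriction map, but identifying the quotient topology on $G^*/H^\perp$ with the pointwise topology on $H^*$ additionally needs the biduality $(H^\perp)^\perp=H$, i.e.\ that characters of the precompact group $G/H$ separate points --- you delegate this to ``the CR description,'' which is fair, but it is the one step a careful referee would ask you to write out. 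Item (5) is the genuinely recent ingredient (it is the main point of \cite{Peng1}), and your derivation is the right mechanism: $\psi(G)$ equals the least cardinality of a point-separating family $S\subseteq G^*$ (kernels are $G_\delta$ sets, and basic neighbourhoods are finite intersections of character preimages by CR), and by annihilator biduality applied to the precompact group $G^*$ (whose dual is $G$), such an $S$ separates points if and only if $\langle S\rangle$ is dense in $G^*$, giving $d(G^*)=\psi(G)$. The only caveats are conventions: the equalities in (4) and (5) require $G$ infinite (or the usual convention that cardinal functions take values $\geq\omega$), and ``Hausdorff implies characters separate points'' uses precompactness via Peter--Weyl on $bG$, which you do have in hand. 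In short: correct, and more self-contained than the paper, whose ``proof'' is a citation.
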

In (2), the topological isomorphism $G\to (G^*)^*$ is given by the {\em evaluation}, i.e., $g\mapsto g^*$, where $g^*(\varphi)=\varphi(g)$ for any $\varphi\in G^*$.
We identify $G$ with $(G^*)^*$ throughout this discussion. Then, $(H^\perp)^\perp = H$ for every closed subgroup $H$ of $G$.

Following Hewitt, we say that a topological space $X$ is {\em pseudocompact} if every continuous real-valued function on $X$ has a bounded range, that is, $C(X) = C^*(X)$. In the influential paper \cite{CR2} by Comfort and Ross, many properties of pseudocompact groups were established. Among other results, they proved the following:

\begin{fact}\label{CR2}
A pseudocompact group is precompact and $G_\delta$-dense in its Weil completion.
\end{fact}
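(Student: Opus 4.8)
The plan is to prove the two assertions in turn: first that a pseudocompact group $G$ is precompact, and then that such a $G$ is $G_\delta$-dense in its Weil completion $K$. The tool underlying both parts is the characterization of pseudocompactness by \emph{feeble compactness}: a Tychonoff space (in particular any topological group) is pseudocompact if and only if every locally finite family of nonempty open sets is finite. I would also use the two standard permanence facts that a continuous image of a pseudocompact space is pseudocompact and that a metrizable pseudocompact space is compact.

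For precompactness I would argue by contraposition. Suppose $G$ is not precompact, so there is a symmetric open neighborhood $U$ of $1_G$ admitting no cover by finitely many left translates. Fixing a symmetric open $V$ with $V^4\subseteq U$, I would recursively choose $x_{n+1}\notin\bigcup_{i\le n}x_iU$; this never terminates, giving an infinite sequence. A short computation shows the sets $x_nV^2$ are pairwise disjoint: if $x_nV^2$ and $x_mV^2$ met for $m>n$, then $x_n^{-1}x_m\in V^4\subseteq U$, contradicting $x_m\notin x_nU$. Consequently, for any $g\in G$ the neighborhood $gV$ is contained in $gV^2$ and hence meets at most one $x_nV$, so $\{x_nV:n\in\omega\}$ is an infinite discrete, in particular locally finite, family of nonempty open sets. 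This contradicts feeble compactness, so $G$ is precompact and $K$ is compact.

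For $G_\delta$-density I would again argue by contraposition, assuming some nonempty $G_\delta$ subset of $K$ misses $G$. Using the structure theory of compact groups — inside every neighborhood of $1_K$ there is a closed normal subgroup with metrizable Lie quotient — I would intersect countably many such subgroups to obtain a closed normal $G_\delta$ subgroup $N$ of $K$ with $K/N$ compact metrizable and a coset $pN$ disjoint from $G$. Writing $\pi:K\to K/N$ for the quotient, $\pi(G)$ is dense in $K/N$ (as $G$ is dense in $K$) yet omits $\pi(p)$, so it is a proper dense subgroup of a compact metrizable space. If $G$ were pseudocompact, then $\pi(G)$, as a continuous image of $G$, would be pseudocompact; being metrizable it would then be compact, hence closed, hence all of $K/N$ by density — contradicting properness. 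Therefore $G$ is $G_\delta$-dense in $K$.

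The step I expect to be the main obstacle is the structure-theoretic input in the second part: realizing the given $G_\delta$ set as containing a coset $pN$ of a closed normal $G_\delta$ subgroup $N$ with $K/N$ metrizable. This rests on the approximation of compact groups by Lie quotients (a consequence of the Peter--Weyl theorem), together with the observation that a countable intersection of closed normal subgroups with metrizable quotients again has metrizable quotient, while remaining inside the prescribed $G_\delta$ neighborhood of the identity. By contrast, the neighborhood computation in the first part and the two permanence properties invoked in the second are routine.
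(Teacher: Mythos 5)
Your proposal is correct, but there is nothing in the paper to compare it against: the paper does not prove this statement, it imports it as a known theorem of Comfort and Ross \cite{CR2}. What you have written is essentially the classical proof of that theorem, and both halves are sound. In the first half, the recursion $x_{n+1}\notin\bigcup_{i\le n}x_iU$ and the disjointness of the sets $x_nV^2$ do yield an infinite discrete family of nonempty open sets, contradicting feeble compactness (which is equivalent to pseudocompactness for Tychonoff spaces, hence for Hausdorff groups). One wording slip: the clause ``$gV$ is contained in $gV^2$'' is not by itself the reason $gV$ meets at most one $x_nV$; the correct justification, which your computation already contains, is that $gV\cap x_nV\neq\emptyset$ forces $g\in x_nVV^{-1}=x_nV^2$, and the sets $x_nV^2$ are pairwise disjoint, so $g$ lies in at most one of them. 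In the second half, the structure-theoretic step you flag as the main obstacle is indeed available exactly as you describe: by the Peter--Weyl approximation of a compact group by Lie quotients, every neighbourhood of $1_K$ contains a closed normal subgroup with compact metrizable quotient; a countable intersection $N=\bigcap_n M_n$ of such subgroups is a closed normal $G_\delta$ subgroup, $K/N$ embeds (as a compact group, homeomorphically) into $\prod_n K/M_n$ and is therefore metrizable, and $N$ stays inside the prescribed $G_\delta$ set after translating it to the identity. The endgame --- $\pi(G)$ is pseudocompact as a continuous image, metrizable, hence compact, hence closed, hence all of $K/N$ by density, contradicting $\pi(p)\notin\pi(G)$ --- is exactly right. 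So your argument supplies a self-contained proof of a fact the paper leaves to the literature.
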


Here, we say that a subspace $Y$ of $X$ is $G_\delta$-dense in $X$ if $Y$ intersects every nonempty $G_\delta$ set of $X$.

By this result, the following is immediate.
\begin{fact}\label{pseudo}
Let $G$ be a pseudocompact group, $H$ a compact metrizable group and $f: G\to H$ a continuous group homomorphism.
Then $f$ is open onto its image and $f(G)$ is closed in $H$.
\end{fact}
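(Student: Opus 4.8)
The plan is to lift the whole situation to the Weil completion $\widehat{G}$, which is compact by precompactness (Fact~\ref{CR2}), and then push the conclusions back down to $G$ using $G_\delta$-density. First I would extend $f$: since $H$ is compact and hence complete, and $G$ is dense in $\widehat{G}$, the continuous homomorphism $f$ extends to a continuous homomorphism $\widehat{f}\colon\widehat{G}\to H$ by the standard extension theorem for uniformly continuous maps into complete groups. Set $N=\ker\widehat{f}$ and $K=\widehat{f}(\widehat{G})$. Because $\widehat{G}$ is compact, $K$ is a compact, hence closed, subgroup of $H$, and the induced map $\widehat{G}/N\to K$ is a continuous bijective homomorphism from a compact group onto a Hausdorff group, therefore a topological isomorphism. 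Consequently the canonical quotient map $q\colon\widehat{G}\to\widehat{G}/N\cong K$ is open, coincides with $\widehat{f}$ under this identification, and satisfies $q|_G=f$.

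The key observation is that $N$ is a $G_\delta$ set in $\widehat{G}$. Indeed, $K$ is metrizable as a subspace of the metrizable group $H$, so $\{1_K\}$ is a $G_\delta$ set in $\widehat{G}/N\cong K$, whence $N=q^{-1}(\{1_K\})$ is a $G_\delta$ set in $\widehat{G}$. It follows that every coset $vN$ with $v\in\widehat{G}$ is a $G_\delta$ set, being a left translate of $N$.

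I would then prove the single claim that $q(V\cap G)=q(V)$ for every open $V\subseteq\widehat{G}$; both assertions of the Fact follow from it. The inclusion $\subseteq$ is trivial. For the reverse, take $z=q(v)\in q(V)$ with $v\in V$. The set $vN\cap V$ is nonempty (it contains $v$) and is the intersection of the $G_\delta$ set $vN$ with the open set $V$, hence is itself a nonempty $G_\delta$ set in $\widehat{G}$. Since $G$ is $G_\delta$-dense in $\widehat{G}$ by Fact~\ref{CR2}, there exists $g\in G\cap vN\cap V$; then $g\in V\cap G$ and $q(g)=q(v)=z$, so $z\in q(V\cap G)$, proving the claim. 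Applying it with $V=\widehat{G}$ yields $f(G)=q(G)=q(\widehat{G})=K$, so $f(G)$ is closed in $H$. Applying it to an arbitrary basic open set $U=V\cap G$ of $G$ yields $f(U)=q(V\cap G)=q(V)$, which is open in $K$ because $q$ is an open map; hence $f\colon G\to f(G)$ is open.

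The main obstacle, and the one point where metrizability of $H$ is indispensable, is establishing that $N$ is a $G_\delta$ set so that its cosets are $G_\delta$: this is exactly what allows $G_\delta$-density to be invoked fibrewise, and it is what forces the argument to pass through the quotient $\widehat{G}/N$ rather than working with preimages of individual points. The remaining ingredients—uniform continuity of $f$ for the extension, openness of the quotient map, and the fact that a continuous bijection from a compact group to a Hausdorff group is a topological isomorphism—are all standard.
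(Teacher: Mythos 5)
Your proof is correct and follows exactly the route the paper has in mind when it calls this fact ``immediate'' from Fact~\ref{CR2}: pass to the Weil completion, extend $f$ to a continuous homomorphism there, note that the kernel is a $G_\delta$ subgroup because $H$ is metrizable, and use $G_\delta$-density of $G$ to see that the restriction of the (open) quotient map to $G$ is still open and surjective onto the compact image. The same argument pattern appears again in the paper's later lemma on pseudocompact pseudo-$\tau$-fine groups, so your write-up is simply a fleshed-out version of the intended proof.
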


\begin{corollary}\label{pseumetr}
A pseudocompact group $G$ with $\psi(G)\leq \omega$ is metrizable and compact.
\end{corollary}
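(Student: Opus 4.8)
The plan is to identify $G$ with a compact metrizable quotient of its Weil completion. Write $\widehat{G}$ for the Weil completion of $G$. Since $G$ is pseudocompact, Fact~\ref{CR2} guarantees that $\widehat{G}$ is compact and that $G$ is $G_\delta$-dense in $\widehat{G}$; I regard $G$ as a dense subgroup of $\widehat{G}$. Because $\psi(G)\leq\omega$, there is a sequence $\{U_n:n\in\omega\}$ of open neighbourhoods of $1_G$ in $G$ with $\bigcap_{n}U_n=\{1_G\}$, and each $U_n$ has the form $V_n\cap G$ for some open $V_n\ni 1$ in $\widehat{G}$. Then $P:=\bigcap_{n}V_n$ is a $G_\delta$ subset of $\widehat{G}$ containing $1$ with $P\cap G=\{1_G\}$. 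Invoking the standard fact (recalled in the discussion preceding Fact~\ref{Fact1}) that a $G_\delta$ set containing the identity of a topological group contains a closed subgroup whose quotient has countable pseudocharacter, I would select a closed subgroup $N\subseteq P$ of $\widehat{G}$ with $\psi(\widehat{G}/N)\leq\omega$; as $\widehat{G}$ is compact, the quotient $\widehat{G}/N$ is a compact group of countable pseudocharacter, hence compact and metrizable.

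Next I would restrict the quotient homomorphism $q\colon \widehat{G}\to\widehat{G}/N$ to $G$. The map $q|_{G}\colon G\to\widehat{G}/N$ is a continuous homomorphism from a pseudocompact group into a compact metrizable group, so Fact~\ref{pseudo} applies: $q|_{G}$ is open onto its image and $q(G)$ is closed in $\widehat{G}/N$. Its kernel is $N\cap G\subseteq P\cap G=\{1_G\}$, so $q|_{G}$ is injective. Moreover $q(G)$ is dense in $\widehat{G}/N$, since $G$ is dense in $\widehat{G}$ and $q$ is a continuous surjection; being also closed, $q(G)=\widehat{G}/N$. Hence $q|_{G}$ is a continuous, open, bijective homomorphism, i.e.\ a topological isomorphism of $G$ onto the compact metrizable group $\widehat{G}/N$. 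Therefore $G$ is compact and metrizable, as desired. As a by-product $G$ is closed in $\widehat{G}$, so in fact $G=\widehat{G}$ and $N=\{1\}$.

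The heart of the argument, and the step I expect to require the most care, is the passage to the metrizable \emph{quotient} rather than to $\widehat{G}$ itself. One cannot conclude directly that $\psi(\widehat{G})\leq\omega$: the $G_\delta$ set $P$ meets $G$ only in $1_G$, yet it may well contain points of $\widehat{G}\setminus G$, so the subgroup $N$ need not be trivial in $\widehat{G}$. What makes the proof work is that pseudocompactness, through the open-mapping statement of Fact~\ref{pseudo}, upgrades the continuous injection $q|_{G}$ into the already metrizable target to a topological isomorphism; the countable pseudocharacter of $G$ is used precisely to produce a metrizable quotient $\widehat{G}/N$ with $N\cap G$ trivial.
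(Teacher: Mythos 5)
Your proof takes a genuinely different route from the paper's, and its overall structure is sound, but there are two points to address.

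First, a real gap: to apply Fact~\ref{pseudo} you need $q|_G\colon G\to\widehat{G}/N$ to be a continuous \emph{homomorphism} into a compact metrizable \emph{group}, so $N$ must be \emph{normal} in $\widehat{G}$. The standard fact you invoke (recalled before Fact~\ref{Fact1}) only yields a closed subgroup $N$ whose \emph{left coset space} has countable pseudocharacter; nothing in that statement gives normality, and without it $\widehat{G}/N$ is not a group and $q$ is not a homomorphism, so Fact~\ref{pseudo} does not apply as stated. The gap is fixable precisely because $\widehat{G}$ is compact: compact groups have small invariant neighborhoods (equivalently, by Peter--Weyl, every identity neighborhood contains a closed \emph{normal} subgroup with Lie quotient), so one can place a closed normal $G_\delta$ subgroup $N$ inside $P$ with $\widehat{G}/N$ compact metrizable. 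With that emendation the rest of your argument is correct: $q|_G$ has kernel $N\cap G\subseteq P\cap G=\{1_G\}$, is open onto its image, and its image is closed and dense, hence $q|_G$ is a topological isomorphism of $G$ onto $\widehat{G}/N$.

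Second, your closing claim that ``one cannot conclude directly that $\psi(\widehat{G})\leq\omega$'' because $P$ ``may well contain points of $\widehat{G}\setminus G$'' is in fact false, and it is exactly the insight of the paper's shorter proof. Since $G$ is $G_\delta$-dense in $\widehat{G}$, take any $p\in P$ and any open $V\ni p$: then $V\cap P$ is a nonempty $G_\delta$ set of $\widehat{G}$, so it meets $G$; but $P\cap G=\{1_G\}$, so $1_G\in V$. As $V$ was arbitrary and $\widehat{G}$ is Hausdorff, $p\in\overline{\{1_G\}}=\{1_G\}$, i.e.\ $P=\{1_G\}$. Hence $\psi(\widehat{G})\leq\omega$, so the compact group $\widehat{G}$ (whose pseudocharacter equals its character) is metrizable, and one application of Fact~\ref{pseudo} to the inclusion $G\to\widehat{G}$ gives $G=\widehat{G}$. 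The paper's route thus needs no quotient construction and no normality considerations, at the cost of invoking $G_\delta$-density a second time; your route, once patched, confines the use of $G_\delta$-density to Fact~\ref{pseudo} alone, which is a legitimate trade-off but not a necessity.
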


\begin{proof}
By Fact \ref{CR2}, the Weil completion $K$ of $G$ is a compact group and $G$ is $G_\delta$ dense in $K$.
Since $\psi(G)\leq \omega$, there exists a $G_\delta$ set $P$ of $K$ such that $P\cap G=\{1_G\}$.
By $G_\delta$ denseness of $G$ in $K$, the singleton $\{1_G\}$ is dense in $P$.
So $P=\{1_G\}$ and we obtain that $\psi(K)\leq \omega$.
As a compact group, $K$ have the same pseudocharacter and character.
So $K$ is first-countable and hence metrizable.
Now by Fact \ref{pseudo}, we have that the inclusion $G\to K$ is surjective, that is, $G=K$.
\end{proof}

\section{Main Results}

\subsection{Constructing Precompact Abelian Groups with Small Pseudo-fineness Index}
In this subsection, we only discuss abelian groups. So the group operations are written additively.
\begin{lemma}\label{Le:Dual}
Let $G$ be a precompact abelian group and $\tau$ an infinite cardinal. Then $G$ is pseudo-$\tau$-fine if and only if every closed subgroup of $G^*$ with density $\leq \tau$ is covered by a separable subgroup of $G^*$.
\end{lemma}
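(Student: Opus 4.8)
The plan is to translate the characterization of pseudo-$\tau$-fineness in Fact \ref{Fact1} into a statement about $G^*$ by means of the Comfort--Ross duality of Fact \ref{Fact2}. For a closed subgroup $H\leq G$, write $H^\perp\leq G^*$ for its annihilator. By Fact \ref{Fact2}(3), $H^\perp$ is closed, and by the identification $G=(G^*)^*$ together with $(H^\perp)^\perp=H$ (applied both to $G$ and to the precompact abelian group $G^*$), the map $H\mapsto H^\perp$ is an inclusion-reversing bijection between the closed subgroups of $G$ and those of $G^*$.

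The crucial preliminary step is the identity $\psi(G/H)=d(H^\perp)$ for every closed $H\leq G$. To establish it, I would apply Fact \ref{Fact2}(3) to the group $G^*$ and its closed subgroup $H^\perp$, obtaining $(H^\perp)^*\cong (G^*)^*/(H^\perp)^\perp\cong G/H$; dualizing once more via Fact \ref{Fact2}(2) gives a topological isomorphism $H^\perp\cong (G/H)^*$. Since $G/H$ is a precompact abelian group, Fact \ref{Fact2}(5) then yields $d(H^\perp)=d\big((G/H)^*\big)=\psi(G/H)$.

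With this in hand, I would read Fact \ref{Fact1} clause by clause through the bijection $H\mapsto H^\perp$. A closed subgroup $M\leq G$ with $\psi(G/M)\leq\tau$ corresponds precisely to a closed subgroup $A:=M^\perp\leq G^*$ with $d(A)\leq\tau$; a closed subgroup $N\subseteq M$ corresponds to a closed subgroup $B:=N^\perp\supseteq A$; and the condition $\psi(G/N)\leq\omega$ becomes $d(B)\leq\omega$, i.e.\ $B$ separable. Hence $G$ is pseudo-$\tau$-fine if and only if every closed subgroup $A\leq G^*$ with $d(A)\leq\tau$ is contained in some closed separable subgroup of $G^*$. To match the stated phrasing, it remains to note that a closed separable subgroup containing $A$ exists if and only if some (not necessarily closed) separable subgroup covers $A$: indeed, the closure $\overline{S}$ of a separable subgroup $S\supseteq A$ is again closed, separable, and contains $A$, while the converse is immediate.

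I expect the only point requiring care to be the bookkeeping of annihilators in the identity $\psi(G/H)=d(H^\perp)$ --- in particular, verifying that the isomorphism $H^\perp\cong (G/H)^*$ is topological so that densities may legitimately be compared. Once this identity is secured, the remainder is a routine dictionary translation; the degenerate case $\tau=\omega$ holds trivially on both sides, since every group is pseudo-$\omega$-fine and every closed subgroup $A$ with $d(A)\leq\omega$ is itself separable.
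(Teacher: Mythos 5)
Your proposal is correct and takes essentially the same approach as the paper: both arguments translate Fact \ref{Fact1} through the Comfort--Ross annihilator correspondence, with the key identity $d(H^\perp)=\psi(G/H)$ extracted from Fact \ref{Fact2}(2),(3),(5) via $H^\perp\cong (G/H)^*$. Your write-up is simply more explicit about points the paper leaves implicit (that every closed subgroup of $G^*$ is an annihilator, the passage to closures of separable subgroups, and the trivial case $\tau=\omega$).
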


\begin{proof}
First, assume that $G$ is pseudo-$\tau$-fine. Take a closed subgroup of $G^*$ of density $\leq \tau$. Without loss of generality, we may assume the group is of the form $N^\perp$, where $N$ is a closed subgroup of $G$. Then we have $(G / N)^* \cong N^\perp$ with $\psi(G / N) \leq \tau$.

By the pseudo-$\tau$-fineness of $G$, there exists a closed subgroup $M \leq N$ such that $\psi(G / M) \leq \omega$. Thus, $N^\perp \leq M^\perp$. It remains to note that $M^\perp$ is separable, which follows immediately from $d(M^\perp) = \psi((M^\perp)^*) = \psi(G / M)$.

For the converse, assume that every closed subgroup of $G^*$ with density $\leq \tau$ is covered by a separable subgroup. Let $N$ be a closed $G_\tau$ subgroup of $G$ with $\psi(G/N)\leq \tau$. By assumption, $N^\perp$ is covered by a separable subgroup $M^\perp \leq G^*$, where $M$ is a closed subgroup of $G$. Then  $M\leq N$ and $M$ satisfies $\psi(G / M) \leq \omega$, it follows that $G$ is pseudo-$\tau$-fine.
\end{proof}

It is evident a topological group is of countable pseudocharacter if and only if it is pseudo-$\tau$-fine for all uncountable $\tau$.
Then, for a topological group $G$ with uncountable pseudocharacter, we write $\pfi(G)$ to be the least cardinal $\tau$ such that $G$ is not pseudo-$\tau$-fine; we call this cardinal the {\em pseudo-fineness index} of $G$.
Then $\pfi(G)$ is regular and $\pfi(G)\leq \psi(G)$ \cite{ZX}.

\begin{corollary}\label{Coro}
Let $G$ be a precompact abelian group such that $\psi(G)$ is uncountable. Then 
\begin{itemize}
\item[(i)] either $\pfi(G)\leq \mathfrak{c}$ or $\pfi(G)$ equals to the least cardinal  $\tau>\mathfrak{c}$ such that $G$ has a quotient group of pseudocharacter $\tau$;
\item[(ii)] $\pfi(G)\leq (2^\mathfrak{c})^+$, the successor of $2^\mathfrak{c}$.
\end{itemize}
\end{corollary}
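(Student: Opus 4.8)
The plan is to transfer everything to the dual group $H:=G^*$ through Lemma~\ref{Le:Dual} and to reduce the statement to a single formula for $\pfi(G)$ in terms of the densities of closed subgroups of $H$. First I would record that in Lemma~\ref{Le:Dual} one may replace ``separable subgroup'' by ``closed separable subgroup'': if $K\subseteq S$ with $S$ separable, then $K\subseteq\overline S$, and $\overline S$ is again a separable subgroup. Letting $m$ denote the least value of $d(K)$ as $K$ ranges over the closed subgroups of $H$ that are \emph{not} contained in any closed separable subgroup, Lemma~\ref{Le:Dual} then gives $\pfi(G)=m$: for such a minimal $K$ the group $G$ is not pseudo-$d(K)$-fine, while for $\tau<m$ every closed $K$ with $d(K)\le\tau$ is covered, so $G$ is pseudo-$\tau$-fine.

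The conceptual core is the following threshold estimate, which I would prove next: if a closed subgroup $K\le H$ is contained in a closed separable subgroup $T$, then $d(K)\le\mathfrak c$. Indeed, weight is monotone on subspaces and dominates density, so $d(K)\le w(K)\le w(T)$; and since $T$ is a separable Tychonoff group one has the standard inequality $w(T)\le 2^{d(T)}\le 2^{\omega}=\mathfrak c$. Contrapositively, \emph{every} closed subgroup of $H$ of density $>\mathfrak c$ fails to be contained in a separable subgroup. Thus $\mathfrak c$ is exactly the dividing line in the formula $\pfi(G)=m$.

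Part (i) now falls out. If $\pfi(G)\le\mathfrak c$ we are in the first alternative. Otherwise $\pfi(G)=m>\mathfrak c$, so by minimality every closed subgroup of density $\le\mathfrak c$ is covered; hence the non-covered closed subgroups are precisely the closed subgroups of density $>\mathfrak c$ (the inclusion ``$\supseteq$'' being the threshold estimate), and $m$ is simply the least density $>\mathfrak c$ realized by a closed subgroup of $H$. Finally I would translate this through the Comfort--Ross duality of Fact~\ref{Fact2}: the closed subgroups of $H$ are exactly the annihilators $N^\perp$ of closed subgroups $N\le G$, and $d(N^\perp)=d((G/N)^*)=\psi(G/N)$, so the least density $>\mathfrak c$ of a closed subgroup of $H$ equals the least $\tau>\mathfrak c$ for which $G$ admits a quotient of pseudocharacter $\tau$ (this set is nonempty because $H$ itself has density $\psi(G)\ge\pfi(G)>\mathfrak c$).

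For part (ii) I would use a cardinality version of the same idea. A closed separable subgroup $T\le H$ is precompact, its completion is compact and separable, so $w(T)\le\mathfrak c$ and hence $|T|\le 2^{w(T)}\le 2^{\mathfrak c}$. If $\psi(G)\le(2^{\mathfrak c})^+$ then already $\pfi(G)\le\psi(G)\le(2^{\mathfrak c})^+$. If $\psi(G)>(2^{\mathfrak c})^+$, then $|H|\ge d(H)=\psi(G)>(2^{\mathfrak c})^+$, so I choose $A\subseteq H$ with $|A|=(2^{\mathfrak c})^+$ and put $K:=\overline{\langle A\rangle}$; then $\langle A\rangle$ is dense in $K$, giving $d(K)\le|\langle A\rangle|=(2^{\mathfrak c})^+$, while $|K|\ge(2^{\mathfrak c})^+>2^{\mathfrak c}$ forces $K$ outside every separable subgroup, so $\pfi(G)\le d(K)\le(2^{\mathfrak c})^+$. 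I expect the main obstacle to be the threshold estimate of the second paragraph together with the correct dictionary between densities of closed subgroups of $H$ and pseudocharacters of quotients of $G$; once these are in place, both parts are bookkeeping with cardinal functions.
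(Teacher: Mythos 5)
Your proposal is correct and takes essentially the same route as the paper: both parts reduce to Lemma~\ref{Le:Dual}, with (i) resting on the fact that subspaces of separable regular spaces have density at most $\mathfrak{c}$ together with the duality dictionary $d(N^\perp)=\psi(G/N)$, and (ii) on closing up a subgroup of size $(2^{\mathfrak{c}})^+$ and the bound $2^{\mathfrak{c}}$ on the cardinality of separable Hausdorff spaces. Your repackaging of the argument into the single formula $\pfi(G)=m$ (least density of a closed subgroup of $G^*$ not covered by a separable subgroup) is a tidy reorganization of the paper's case analysis rather than a genuinely different proof.
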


\begin{proof}

(i) If $G$ is pseudo-$\kappa$-fine for every $\kappa\leq \mathfrak{c}$ and $\tau$ is the least cardinal satisfying that $\tau>\mathfrak{c}$ and that $G$ admits a quotient group of pseudocharacter $\tau$.
Then for every $\lambda<\tau$ and every closed subgroup $M$ of $G$ with $\psi(G/M)\leq \lambda$, one has that there exists $\kappa\leq \mathfrak{c}$ such that $\psi(G/M)=\kappa$ by the choice of $\tau$.
Therefore, the pseudo-$\kappa$-fineness of $G$ allows us to find a closed subgroup $M_1$ of $G$ so that $G/M_1$ is of countable pseudocharacter.
This is saying that $G$ is also pseudo-$\lambda$-fine.
We are going to see that $G$ is not pseudo-$\tau$-fine.
By the choice of $\tau$, $G$ admits a closed subgroup $N$ such that $\psi(G/N)=\tau$.
Then, $N^\perp$ is a closed subgroup of $G^*$ of density $\tau$, by Fact \ref{Fact2}.
Since $\tau>\mathfrak{c}$, $N^\perp$ cannot be covered by a separable subgroup of $G^*$, because regular separable spaces are of weight $\leq \mathfrak{c}$ and their subspaces are therefore of density $\leq \mathfrak{c}$.
Now apply Lemma \ref{Le:Dual}.

(ii) Since $\pfi(G)\leq \psi(G)$, we may assume that $\psi(G)>(2^\mathfrak{c})^+$.
So $d(G^*)>(2^\mathfrak{c})^+$ and $G^*$ has a subgroup $A$ with size $(2^\mathfrak{c})^+$. Let $B$ be the closure of $A$.
Then $A$, hence also $B$, cannot be covered by a separable subgroup of $G^*$, because Hausdorff separable spaces are of cardinality $\leq 2^\mathfrak{c}$ \cite[Theorem 1.5.3]{Eng}.
Let $\tau=d(B)$. It follows from Lemma \ref{Le:Dual} that $G$ is not pseudo-$\tau$-fine.
So $\pfi(G)\leq \tau=d(B)\leq |A|=(2^\mathfrak{c})^+$.
\end{proof}

\begin{theorem}\label{pfi}
For any uncountable regular cardinal $\tau\leq \mathfrak{c}^+$, there exists a precompact abelian group $G$ such that $\pfi(G)=\tau$.
\end{theorem}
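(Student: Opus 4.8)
The plan is to pass to the Comfort--Ross dual and thereby reduce the whole theorem to a single explicit construction. By Fact \ref{Fact2}(2) the duality $G\mapsto G^*$ is involutive, so it is equivalent to produce a precompact abelian group $H$ and then set $G:=H^*$, for which $G^*\cong H$. By Fact \ref{Fact2}(5) we have $\psi(G)=d(G^*)=d(H)$, and by Lemma \ref{Le:Dual} the group $G$ is pseudo-$\kappa$-fine exactly when every closed subgroup of $H$ of density $\le\kappa$ is covered by a separable subgroup of $H$. Hence it suffices to construct a precompact abelian $H$ with the two properties: (a) $d(H)=\tau$, and (b) every closed subgroup of $H$ of density $<\tau$ is covered by a separable subgroup. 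Granting this, $G=H^*$ has $\psi(G)=\tau$ (so $\pfi(G)$ is defined); property (b) makes $G$ pseudo-$\kappa$-fine for every uncountable $\kappa<\tau$, whence $\pfi(G)\ge\tau$; and $H$ itself, a closed subgroup of $H$ of density $\tau$ that is not separable, is covered by no separable subgroup, so $G$ is not pseudo-$\tau$-fine and $\pfi(G)\le\tau$. Thus $\pfi(G)=\tau$.

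To build such an $H$ I would take the \emph{bounded-support} subgroup of $\mathbb{Z}(2)^\tau$. Indexing the coordinates of $\mathbb{Z}(2)^\tau$ by $\tau$, set $H_\alpha:=\{x\in\mathbb{Z}(2)^\tau: x_\beta=0\text{ for all }\beta\ge\alpha\}\cong\mathbb{Z}(2)^{[0,\alpha)}$ for $\alpha<\tau$, and let $H:=\bigcup_{\alpha<\tau}H_\alpha$, the subgroup of all $x$ whose support is bounded in $\tau$ (this is a subgroup because $\tau$ is regular, so a union of two bounded sets is bounded). Each $H_\alpha$ is an intersection of kernels of coordinate projections, hence a compact, and in particular closed, subgroup of $\mathbb{Z}(2)^\tau$, and the chain $(H_\alpha)_{\alpha<\tau}$ is strictly increasing. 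Since $\alpha<\tau\le\mathfrak{c}^+$ forces $|\alpha|\le\mathfrak{c}$, the group $H_\alpha\cong\mathbb{Z}(2)^{[0,\alpha)}$ is a product of at most $\mathfrak{c}$ separable factors, hence separable by the Hewitt--Marczewski--Pondiczery theorem. Finally, $H$ contains all finitely supported elements of $\mathbb{Z}(2)^\tau$, so it is dense in the compact group $\mathbb{Z}(2)^\tau$ and therefore precompact.

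Both properties then follow from the regularity of $\tau$. For (a): the union of a countable dense subset of each $H_\alpha$ is dense in $H$ and has size $\tau$, so $d(H)\le\tau$; conversely any subset of $H$ of size $<\tau=\mathrm{cf}(\tau)$ is a union of $<\tau$ bounded supports, hence has bounded support and lies in a single $H_{\alpha_0}$, whose closure is the proper subgroup $H_{\alpha_0}\ne H$, so no set of size $<\tau$ is dense and $d(H)=\tau$. For (b): if $N$ is a closed subgroup with $d(N)=\lambda<\tau$, choose a dense $D\subseteq N$ with $|D|=\lambda$; as $\lambda<\mathrm{cf}(\tau)$, regularity places $D\subseteq H_{\alpha_0}$ for some $\alpha_0<\tau$, and since $H_{\alpha_0}$ is closed we obtain $N=\overline{D}\subseteq H_{\alpha_0}$, a separable subgroup. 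This establishes (b).

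The decisive point, and the source of the hypothesis $\tau\le\mathfrak{c}^+$, is the demand that the links $H_\alpha$ of a strictly increasing chain of length $\tau$ all be separable: a compact group is separable precisely when its weight is at most $\mathfrak{c}$, which here requires $|\alpha|\le\mathfrak{c}$ for all $\alpha<\tau$, i.e.\ $\tau\le\mathfrak{c}^+$. Regularity of $\tau$ is the second essential ingredient, since it is exactly what forces every subset of size $<\tau$ into one separable link, producing the density gap between $<\tau$ and $\tau$ on which (a) and (b) rest. I expect the only remaining care to be bookkeeping: confirming via Fact \ref{Fact2} that $G=H^*$ genuinely satisfies $G^*\cong H$ so that Lemma \ref{Le:Dual} applies verbatim, and that $\psi(G)=d(H)=\tau$ is uncountable so that $\pfi(G)$ is well defined.
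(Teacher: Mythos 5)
Your proof is correct, and it is genuinely different from the paper's: you work on the dual side of the Comfort--Ross duality, while the paper works on the primal side. The paper takes $G$ to be the free abelian group $F(X)$ on a set $X$ of size $\tau$, topologized by characters $f_Y$ ($Y\in[X]^{<\tau}$) whose kernels are the summands $F(X\setminus Y)$; it then checks pseudo-$\lambda$-fineness for $\lambda<\tau$ directly from the definition --- every $G_\lambda$-set around $0$ contains an intersection of $\lambda$ many character kernels, and its Claim~1 (the same regularity argument you use) shows such an intersection contains a single kernel $\ker f_Y$, which is a closed $G_\delta$ subgroup --- and gets the failure at $\tau$ from $\psi(G)=\tau$ together with $\pfi(G)\le\psi(G)$; Lemma~\ref{Le:Dual} is never invoked there. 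You instead construct the dual group concretely as the bounded-support subgroup $H=\bigcup_{\alpha<\tau}H_\alpha$ of $\mathbb{Z}(2)^{\tau}$ and transfer everything back through Fact~\ref{Fact2}(2),(5) and Lemma~\ref{Le:Dual}. The two constructions are close to mirror images of one another (the dual of the paper's $G$ is likewise a $\tau$-length increasing union of small character groups, cofinally indexed by initial segments of $X$), and both rest on the same two pillars: regularity of $\tau$ forces every set of size $<\tau$ into a single link of the chain, and $\tau\le\mathfrak{c}^+$ plus Hewitt--Marczewski--Pondiczery keeps each link separable. What your route buys is concreteness and soft verifications: the links $H_\alpha$ are compact separable groups, and all the checking is elementary product-space topology. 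The cost is that you need the heavier duality facts (biduality $(H^*)^{*}\cong H$ and $\psi(H^*)=d(H)$) to even define $G$ and compute its pseudocharacter, whereas the paper needs only that character kernels are closed $G_\delta$ subgroups and that identity neighbourhoods in its group contain finite intersections of such kernels, and it exhibits $G$ explicitly rather than as a dual object. Two minor points in your write-up: closedness of bounded supports under addition needs only that $\tau$ is a limit ordinal (regularity enters exactly where you later invoke it, to bound suprema of fewer than $\tau$ ordinals); and for $\tau=\omega_1$ the bound $\pfi(G)\ge\tau$ is not witnessed by any uncountable $\kappa<\tau$, but it holds anyway because every topological group is pseudo-$\omega$-fine.
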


\begin{proof}
Let $X$ be a set of size $\tau$ and $F(X)$ the free abelian group over $X$.
Then for every subset $Y$ of $X$, the free abelian group $F(Y)$ is naturally considered as a subgroup of $F(X)$, hence $F(X)=F(Y)\oplus F(X\setminus Y)$.
For each $Y\in [X]^{<\tau}$, we let $f_Y: F(X)\to \mathbb{T}$ be a homomorphism with kernel $F(X\setminus Y)$; the existence of such a homomorphism follows from that $F(Y)=\omega\cdot |Y|\leq \mathfrak{c}$.
Let $G=F(X)$ be endowed with the coarsest topology making all $f_Y$ continuous.
It is easy to see that $G$ becomes a Hausdorff precompact group.
So, $G^*$ is a dense subgroup of $\widehat{F(X)}$.
That $G$ is a desired group will follows easily from the next Claim.

\begin{claim}\label{Claim1}For every subset $A$ of $G^*$ with $|A|<\tau$, there exists $Y\in [X]^{<\tau}$ such that $F(X\setminus Y)\subseteq \bigcap_{a\in A}\ker a$.\end{claim}

Note that every $a\in A$ can be represented as
 $$n_1f_{Y_1}+n_2f_{Y_2}+...+n_mf_{Y_m}$$
 for $m\in \mathbb{N}$, $n_1,n_2,...,n_m\in \mathbb{Z}$ and $Y_1, Y_2,..., Y_m\in [X]^{<\tau}$.
Then $\ker a$ contains $$\bigcap_{i=1}^m \ker f_{Y_i}=\bigcap_{i=1}^mF(X\setminus Y_i)=F(X\setminus\bigcup_{i=1}^m Y_i).$$
Note that $Y_a:=\bigcup_{i=1}^m Y_i$ has cardinality $<\tau$ and $\tau$ is regular.
So the cardinality of $Y:=\bigcup_{a\in A}Y_a$ is again less than $\tau$.
Then $f_Y\in G^*$ and $F(X\setminus Y)=\ker f_Y$ is a closed $G_\delta$ subgroup of $G$.
It is evident that $F(X\setminus Y)\subseteq F(X\setminus Y_a)=\ker a$, for any $a\in A$.
So the claim has been proven.

For any infinite $\lambda\leq \tau$, and a $G_\lambda$ set $U$ of $G$ such that $0\in U$, there exists a subset $A$ of $G^*$ such that $|A|=\lambda$ such that $\bigcap_{a\in A}\ker a\subseteq U$.
By Claim 1, we obtain that there exists $Y\in [X]^{<\tau}$ with $F(X\setminus Y)\subseteq U$.
Since there exists an element $f_Y\in G^*$ having kernel $F(X\setminus Y)$, we know that $F(X\setminus Y)$ is a closed $G_\delta$ subgroup.
So $G$ is pseudo-$\lambda$-fine.
To see that $G$ is not $\tau$-fine, we only need to prove that $\psi(G)=\tau$.
While it follows from immediately from Claim 1 that the intersection of less than $\tau$ many open neighbourhood of $1_G$ always contains a nontrivial subgroup.
So the proof is complete.
\end{proof}

\begin{corollary}
There exists a pseudo-$\mathfrak{c}$-fine precompact abelian group whose pseudocharacter is uncountable.
\end{corollary}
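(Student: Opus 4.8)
The plan is to obtain this example simply by specializing Theorem \ref{pfi} to the largest cardinal permitted by its hypothesis, namely $\tau = \mathfrak{c}^+$. First I would verify that $\mathfrak{c}^+$ meets all the requirements of Theorem \ref{pfi}: it is uncountable, it is a successor cardinal and hence regular, and it trivially satisfies $\mathfrak{c}^+ \leq \mathfrak{c}^+$. Consequently the theorem produces a precompact abelian group $G$ with $\pfi(G) = \mathfrak{c}^+$.

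Next I would unwind the definition of the pseudo-fineness index. Since $\pfi(G)$ is by definition the least cardinal for which $G$ fails to be pseudo-fine, the group $G$ is pseudo-$\lambda$-fine for every $\lambda < \pfi(G) = \mathfrak{c}^+$. As $\mathfrak{c} < \mathfrak{c}^+$, taking $\lambda = \mathfrak{c}$ shows that $G$ is pseudo-$\mathfrak{c}$-fine, which is the first assertion. For the pseudocharacter I would invoke the inequality $\pfi(G) \leq \psi(G)$ recorded just before Corollary \ref{Coro}, which gives $\psi(G) \geq \mathfrak{c}^+ > \omega$; thus $G$ has uncountable pseudocharacter, as required. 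Alternatively, one reads off $\psi(G) = \mathfrak{c}^+$ directly from the construction in the proof of Theorem \ref{pfi}, where it is shown that the intersection of fewer than $\tau$ identity neighbourhoods always contains a nontrivial subgroup.

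There is no genuine obstacle here, since the corollary is just a reading of Theorem \ref{pfi} at the endpoint $\tau = \mathfrak{c}^+$. The only points demanding a moment's care are confirming the regularity of $\mathfrak{c}^+$ (immediate, as successor cardinals are regular) and checking that $\mathfrak{c}$ lies strictly below $\mathfrak{c}^+$, so that pseudo-$\mathfrak{c}$-fineness is indeed guaranteed by the value of the pseudo-fineness index.
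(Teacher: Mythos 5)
Your proposal is correct and is exactly the argument the paper intends: the corollary is stated without proof as an immediate consequence of Theorem \ref{pfi} applied at $\tau=\mathfrak{c}^+$, using that $\mathfrak{c}^+$ is regular and that $\pfi(G)=\mathfrak{c}^+>\mathfrak{c}$ forces pseudo-$\mathfrak{c}$-fineness while $\psi(G)\geq\pfi(G)$ (indeed $\psi(G)=\mathfrak{c}^+$ from the construction) gives uncountable pseudocharacter.
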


Due to Corollary \ref{Coro} and Theorem \ref{pfi}, we will conclude this subsection with the following question.

\begin{question}
Let $\tau$ be a regular cardinal such that $\mathfrak{c}^+<\tau\leq (2^\mathfrak{c})^+$. Is there a precompact abelian group whose pseudo-fineness index is $\tau$?
\end{question}

\subsection{Pseudocompactness Destroys Pseudo-$\tau$-fineness}
In this subsection we will see that if a topological group is pseudocompact and pseudo-$\tau$-fine for some uncountable $\tau$, then it must be metrizable and compact.
We begin with an easy lemma.

\begin{lemma}\label{sub}
Let $G$ be a group and $H$ a normal subgroup.
We denote by $\pi$ the quotient homomorphism $G \to G/H$.
Then for two normal subgroups $A, B$ of $G$, we have $A = B$ whenever $A \cap H = B \cap H$ and $\pi(A) = \pi(B)$ hold.
\end{lemma}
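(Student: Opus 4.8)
The plan is to prove this purely algebraically, exploiting the standard bijection between subgroups of $G/H$ and subgroups of $G$ containing $H$, but being careful because $A$ and $B$ need not contain $H$. First I would reduce the equality $A=B$ to the two set-theoretic inclusions and, by symmetry, prove only $A\subseteq B$; the reverse inclusion follows by interchanging the roles of $A$ and $B$ since the hypotheses $A\cap H=B\cap H$ and $\pi(A)=\pi(B)$ are symmetric.

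To show $A\subseteq B$, take an arbitrary $a\in A$. Since $\pi(A)=\pi(B)$, there exists some $b\in B$ with $\pi(a)=\pi(b)$, which means $aH=bH$, i.e. $b\obr a\in H$. Because $A$ and $B$ are both normal subgroups (in particular, $b\obr a$ lies in the subgroup generated by $A$ and $B$, and more directly $b\obr a=b\obr a$ with $b\in B\subseteq$ the relevant group), I would argue that $b\obr a$ lands in $H$ and simultaneously in a subgroup that lets me invoke the hypothesis $A\cap H=B\cap H$. The clean way: from $a\in A$ and $b\in B$ with $b\obr a\in H$, I do not yet know $b\obr a\in A$ or $b\obr a\in B$, so I cannot directly conclude. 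Instead I would consider the element $c:=b\obr a$. We have $c\in H$. To place $c$ inside $A$ or $B$ I need more structure, and this is where I expect the main obstacle.

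The resolution, and the crux of the argument, is to observe that since $a\in A$ and we can choose $b\in B$ with $b=a h$ for some $h\in H$ (rewriting $b\obr a\in H$ appropriately), the element $h=b\obr a$ or its inverse compares $a$ and $b$. The key step is: because $b\in B$ and $a\in A$, and I want $a\in B$, it suffices to show $c=b\obr a\in B$; then $a=bc\in B$. Now $c\in H$ by construction, so if I can show $c\in B$ it follows from $c\in B\cap H$. Symmetrically, $a=bc$ gives $c=b\obr a$, and I would show $c\in A$ by noting $c=b\obr a$ where I instead pick the witness the other way: since $\pi(a)=\pi(b)$ also yields, via $a\in A$, that $c\in A\cap H=B\cap H\subseteq B$, whence $a=bc\in B$. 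Thus the hypothesis $A\cap H=B\cap H$ is exactly what transfers the ``difference'' element $c$, which a priori lies only in $H$, into both $A$ and $B$.

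I would therefore organize the final write-up as: fix $a\in A$; produce $b\in B$ with $b\obr a\in H$ using $\pi(A)=\pi(B)$; show the connecting element lies in $A\cap H$ (using $a\in A$ and $b\obr a\in H$ together with normality to guarantee membership in $A$); apply $A\cap H=B\cap H$ to move it into $B$; and finally reassemble $a$ as a product of two elements of $B$ to conclude $a\in B$. The main obstacle, as indicated, is justifying that the connecting element $b\obr a$ genuinely lies in $A$ (not merely in $H$) so that the hypothesis on intersections can be applied; here normality of $A$ and $B$ and the coset relation $aH=bH$ are the tools, and I would verify this membership carefully before invoking $A\cap H=B\cap H$.
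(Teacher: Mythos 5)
Your reduction to showing $A\subseteq B$ and the choice of a witness $b\in B$ with $c:=b^{-1}a\in H$ are fine, but the step you yourself identify as the crux --- placing the connecting element $c$ inside $A$ (so that $A\cap H=B\cap H$ can be invoked) --- is a genuine gap, not a technicality to be ``verified carefully.'' The element $c=b^{-1}a$ is a product of an element of $B$ with an element of $A$; normality of $A$ and $B$ gives closure under conjugation, not under multiplication by outside elements, so nothing forces $c$ into $A$ or into $B$. Your ``resolution'' paragraph (``since $\pi(a)=\pi(b)$ also yields, via $a\in A$, that $c\in A\cap H$'') simply asserts the needed membership; it is circular.

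Moreover, no argument can close this gap, because the lemma as stated is false. Take $G=\mathbb{Z}/2\times\mathbb{Z}/2$, $H=\{(0,0),(1,1)\}$, $A=\mathbb{Z}/2\times\{0\}$, $B=\{0\}\times\mathbb{Z}/2$. All subgroups are normal, $A\cap H=B\cap H=\{(0,0)\}$, and $\pi(A)=\pi(B)=G/H$ since both $A$ and $B$ surject onto the two-element quotient; yet $A\neq B$. This also witnesses exactly where your element chase dies: for $a=(1,0)$ the only admissible witness is $b=(0,1)$, and then $c=b^{-1}a=(1,1)$ lies in $H$ but in neither $A$ nor $B$. (You are in good company: the paper's own proof is flawed at the analogous point, since its identity $\varphi(A)=\pi'^{-1}(\pi(A))$ in fact reads $\pi'^{-1}(\pi(A))=\varphi(AH)$, which equals $\varphi(A)$ only when $H\subseteq A$ --- a hypothesis that is not assumed.) The statement does become true, and both your approach and a one-line appeal to Dedekind's modular law then work, if one adds the hypothesis that one subgroup contains the other: if $A\subseteq B$, then $B=B\cap AH=A(B\cap H)=A(A\cap H)=A$. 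Without such an extra hypothesis, the correct conclusion is that the pair $(N\cap H,\pi(N))$ does \emph{not} determine a normal subgroup $N$, and any proof attempt must fail.
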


\begin{proof}
Let $C = A \cap H = B \cap H$ and $\varphi: G \to G/C$ the quotient homomorphism.
Then there exists a natural homomorphism $\pi': G/C \to G/H$ with $\pi = \pi' \circ \varphi$.
From $\pi(A) = \pi(B)$, it follows that
$$ \varphi(A) = \pi'^{-1}(\pi(A)) = \pi'^{-1}(\pi(B)) = \varphi(B). $$
So we have $A = \varphi^{-1}(\varphi(A)) = \varphi^{-1}(\varphi(B)) = B$.
\end{proof}

Let $G$ be a topological group. A closed normal subgroup $N$ of $G$ is called a {\em co-Lie} subgroup of $G$ if $G/N$ is a Lie group. The cardinality of the set of all co-Lie subgroups of $G$ is called the {\em co-Lie character} of $G$ and denoted by $cL(G)$.
This cardinal function will play a central role in the proof of our final result.

\begin{lemma}\label{colie}
Let $G$ be a topological group and $H$ a compact normal subgroup. Then $cL(G)\leq cL(H) \cdot cL(G/H)$.
\end{lemma}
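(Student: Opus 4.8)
The plan is to build an injection from the set of co-Lie subgroups of $G$ into the product of the set of co-Lie subgroups of $H$ with the set of co-Lie subgroups of $G/H$; the desired inequality $cL(G)\leq cL(H)\cdot cL(G/H)$ then follows immediately. Write $\pi\colon G\to G/H$ for the quotient homomorphism. To each co-Lie subgroup $N$ of $G$ I would associate the pair $(N\cap H,\ \pi(N))$, and the whole proof amounts to checking that this pair lies in the target and that the assignment is injective.

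First I would verify that both coordinates land where they should. Since $H$ is compact and $N$ is closed, the set $HN=NH$ is a closed (normal) subgroup of $G$. Hence $N\cap H$ is a closed normal subgroup of $H$, and the canonical continuous bijective homomorphism $H/(N\cap H)\to HN/N$ is a homeomorphism, because its source is compact and its target is Hausdorff; thus $H/(N\cap H)\cong HN/N$ topologically. As $HN/N$ is a closed subgroup of the Lie group $G/N$, it is itself a Lie group, so $N\cap H$ is a co-Lie subgroup of $H$. For the second coordinate, $\pi(N)=NH/H$ is closed in $G/H$ since $NH$ is closed and contains $H$, and $(G/H)/\pi(N)\cong G/NH\cong (G/N)/(NH/N)$ is a Hausdorff quotient of the Lie group $G/N$, hence a Lie group. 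Therefore $\pi(N)$ is a co-Lie subgroup of $G/H$.

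Next I would establish injectivity, which is exactly what Lemma \ref{sub} was set up to provide. If $N_1$ and $N_2$ are co-Lie subgroups of $G$ with $N_1\cap H=N_2\cap H$ and $\pi(N_1)=\pi(N_2)$, then applying Lemma \ref{sub} to the normal subgroups $A=N_1$ and $B=N_2$ yields $N_1=N_2$. Thus $N\mapsto (N\cap H,\pi(N))$ is injective, and the cardinal bound follows.

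The isomorphism-theorem identifications are routine, relying only on the standard facts that quotient maps of topological groups are open and that closed subgroups and Hausdorff quotients of Lie groups are again Lie groups. I expect the compactness of $H$ to be the real crux: it is used twice, once to guarantee that $HN$ is closed (so that both coordinate subgroups are closed) and once to promote the purely algebraic isomorphism $H/(N\cap H)\cong HN/N$ to a topological one, which is what makes $N\cap H$ genuinely co-Lie. Everything else is bookkeeping.
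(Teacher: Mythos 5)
Your proof is correct and follows essentially the same route as the paper's: encode each co-Lie subgroup $N$ of $G$ as the pair $(N\cap H,\pi(N))$, check both coordinates are co-Lie, and get injectivity from Lemma \ref{sub}. In fact you are slightly more careful than the paper at the second coordinate, where the paper asserts $\pi(G)/\pi(N)\cong G/N$ while the accurate statement is your $(G/H)/\pi(N)\cong G/NH\cong (G/N)/(NH/N)$, a Hausdorff quotient of the Lie group $G/N$.
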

\begin{proof}
Let $\pi: G \to G/H$ be the quotient homomorphism and $N$ a co-Lie subgroup of $G$.
Since $H$ is compact, $H/(N\cap H)$ is topologically isomorphic to the closed subgroup $HN/N$  of the Lie group $G/N$.
Hence $H/(N\cap H)$ is also a Lie group and $N\cap H$ is a co-Lie subgroup of $H$.
Moreover, it follows from the topological isomorphism
$\pi(G)/\pi(N)\cong G/N$
that $\pi(N)$ is a co-Lie subgroup of $\pi(G)=G/H$.

By Lemma \ref{sub}, $N$ is totally determined by the
pair $(N \cap H, \pi(N))$, where the first is a co-Lie subgroup of $H$ and the second is a co-Lie subgroup $G/H$. So we have $cL(G) \leq cL(H) \cdot cL(G/H)$.
\end{proof}

\begin{lemma}\label{cl=w}
For any infinite pseudocompact group $G$ we have that $cL(G)\cdot \omega = w(G)$.
\end{lemma}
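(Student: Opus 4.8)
The plan is to reduce the statement to the classical compact case by passing to the Weil completion. Let $K$ denote the Weil completion of $G$. By Fact \ref{CR2}, $K$ is a compact group and $G$ is $G_\delta$-dense in $K$. A standard completion argument shows that restricting and then closing neighbourhoods of the identity identifies a local base of $K$ with one of $G$, so $\chi(G)=\chi(K)$; since both $G$ and $K$ are infinite precompact groups, this gives $w(G)=w(K)$. Hence it suffices to prove the two equalities $cL(G)=cL(K)$ and $cL(K)\cdot\omega=w(K)$.

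The core of the reduction is to show that the trace map $N\mapsto N\cap G$ is a bijection from the co-Lie subgroups of $K$ onto those of $G$. If $N$ is co-Lie in $K$, then $K/N$ is a compact metrizable Lie group, and the composite $G\hookrightarrow K\to K/N$ is, by Fact \ref{pseudo}, open onto a closed image; being also dense, its image is all of $K/N$, so $G/(N\cap G)\cong K/N$ is a Lie group and $N\cap G$ is co-Lie in $G$. For surjectivity, start from a co-Lie subgroup $M$ of $G$: the quotient $G/M$ is pseudocompact (a continuous image of $G$) and has countable pseudocharacter (being a Lie group), so by Corollary \ref{pseumetr} it is compact. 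The homomorphism $G\to G/M$ then extends over the completion to $\bar f\colon K\to G/M$, whose image is compact and dense, hence all of $G/M$; thus $N:=\ker\bar f$ is co-Lie in $K$ with $N\cap G=M$. Injectivity is where $G_\delta$-density is indispensable: each co-Lie subgroup $N$ is a closed $G_\delta$ subgroup of $K$, so every nonempty $G_\delta$ subset of $N$ is $G_\delta$ in $K$ and therefore meets $G$; this makes $N\cap G$ dense in $N$, whence $\overline{N\cap G}=N$ and $N$ is recovered from its trace. Altogether $cL(G)=cL(K)$.

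The remaining equality $cL(K)\cdot\omega=w(K)$ for the infinite compact group $K$ is classical structure theory of compact groups \cite{AT}. On the one hand the co-Lie subgroups of $K$ form a base at the identity and each $K/N$ is second countable, so pulling back countable bases yields $\chi(K)\leq cL(K)\cdot\omega$, and hence $w(K)\leq cL(K)\cdot\omega$. On the other hand, by the Peter--Weyl theorem every co-Lie subgroup equals the intersection of the kernels of the finitely many irreducible representations appearing in a faithful representation of $K/N$; thus the co-Lie subgroups are indexed by finite subsets of the set $R$ of irreducible continuous unitary representations of $K$, giving $cL(K)\leq|[R]^{<\omega}|=|R|\cdot\omega=w(K)$. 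The two inequalities combine to the claim.

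I expect the main obstacle to be the injectivity of the trace map, that is, recovering a co-Lie subgroup $N\leq K$ from $N\cap G$, since this step genuinely fails for arbitrary dense subgroups and relies on the fact that $G_\delta$-density descends to closed $G_\delta$ subgroups. By contrast, surjectivity is comparatively soft once one observes that a pseudocompact Lie group is compact (Corollary \ref{pseumetr}) and appeals to Fact \ref{pseudo} together with the universal property of the Weil completion.
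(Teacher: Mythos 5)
Your proof is correct, and its first half coincides with the paper's: the paper also passes to the Weil completion $K$, uses Fact \ref{pseudo} plus the $G_\delta$-density of $G$ in $K$ to set up a one-to-one correspondence between co-Lie subgroups of $K$ and of $G$ (the paper phrases it via closures, you via the trace map; your surjectivity argument through Corollary \ref{pseumetr} and the extension property of the completion actually makes explicit a step the paper glosses over). Where you genuinely diverge is the compact case. The paper proves $cL(K)\le w(K)$ by a three-way case analysis --- abelian groups via Pontryagin duality (Case \ref{caseabel}: co-Lie subgroups correspond to finitely generated subgroups of the dual), connected groups via the Hofmann--Morris structure theorem applied to $G$ modulo its center (Case \ref{casecon}), and profinite groups via counting open subgroups (Case \ref{casetd}) --- and then glues the cases along the identity component using Lemmas \ref{sub} and \ref{colie}. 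You instead invoke Peter--Weyl once, uniformly: every co-Lie subgroup is the intersection of the kernels of the finitely many irreducible constituents of a faithful representation of the Lie quotient, giving $cL(K)\le |[R]^{<\omega}|=|R|\cdot\omega=w(K)$. Your route is shorter and renders Lemmas \ref{sub} and \ref{colie} unnecessary for this lemma, at the price of quoting the representation-theoretic fact that an infinite compact group has exactly $w(K)$ inequivalent irreducible unitary representations, which is classical but not among the facts the paper cites; the paper's longer route stays within duality and structure theory and develops the submultiplicativity $cL(G)\le cL(H)\cdot cL(G/H)$ (Lemma \ref{colie}), which has independent interest (see the Remark following the lemma). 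Two presentational points you should tighten, though neither is a real gap: in the injectivity step, the clean statement is that every nonempty \emph{relatively open} subset of $N$ is a $G_\delta$ set of $K$ (being the intersection of an open set with the $G_\delta$ set $N$), hence meets $G$; and your bound $\chi(K)\le cL(K)\cdot\omega$ needs the standard saturation argument --- given an identity neighbourhood $U$ and a co-Lie $N\subseteq U$, the set $(K/N)\setminus\pi_N(K\setminus U)$ is an open identity neighbourhood whose preimage lies in $U$ --- since pulling back countable bases of the quotients does not by itself produce sets contained in $U$.
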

\begin{proof}
Let $K$ be the completion of $G$.
Then we have that $w(K) = w(G)$.
Moreover, if $N$ is a co-Lie subgroup of $G$, then the closure of $N$ is a co-Lie subgroup of $K$.
And conversely, if $M$ is a co-Lie subgroup of $K$, then by Fact \ref{pseudo}, the restriction of the canonical homomorphism $K \to K/M$ to $G$ remains open and surjective.
So $M \cap G$ is a co-Lie subgroup of $G$. Moreover, $M \cap N$ is dense in $M$ because $M$ is a $G_\delta$ subgroup and $G$ is $G_\delta$ dense in $K$.
Then $M \to M \cap G$ gives the one-to-one correspondence between the sets of co-Lie subgroups of $K$ and of $G$.
So we have that $cL(G) = cL(K)$.
From the above argument, we may assume without loss of generality that $G = K$, i.e., $G$ is compact.

If $cL(G)$ is finite, then $G$ is evidently a Lie group. So $w(G)=\omega=cL(G)\cdot \omega$ and we are done.

From now on we assume that $cL(G)$ is infinite and we will see that $cL(G)=w(G)$.
Let $\mathcal{C}$ be the set of all co-Lie subgroups of $G$.
It is well known that $G$ embeds into $P := \prod_{N \in \mathcal{C}} G/N$ as a topological subgroup.
So we have that
$$ w(G) \leq w(P) = |\mathcal{C}| = cL(G). $$

It remains to show that $cL(G) \leq w(G)$. 
We now consider three cases, which will be enough to prove the lemma.
Let $\tau$ be the weight of $G$.

\begin{case}\label{caseabel}
$G$ is abelian.
\end{case}
Let $D$ be the Pontryagin dual group of $G$.
Since compact abelian Lie groups are of the form $\mathbb{T}^n \times F$, where $n \in \mathbb{N}$ and $F$ is finite, a closed subgroup $H$ of $G$ is co-Lie if and only if the Pontryagin dual group of $G/H$ is a finitely generated subgroup of $D$.
So $cL(G)$ coincides with the number of finitely generated subgroups of $D$.
Let $[D]^{<\omega}$ be the set of finite subsets of $D$, then we have
$$ cL(G) \leq |[D]^{<\omega}| = |D| = w(G). $$

\begin{case}\label{casecon}
$G$ is connected.
\end{case}
Let $C$ be the center of $G$. Then, by the structure theorem of connected compact groups \cite[Theorem 9.24]{HM}, $G/C$ is a product of at most $\tau$ many non-trivial simple compact Lie groups.
Here, ``simple'' means ``topologically simple'', so a simple Lie group is a center-free Lie group with a simple Lie algebra.
Let us then denote $G/C$ by $H = \prod_{i \in I} S_i$, where $\kappa := |I| \leq \tau$ and each $S_i$ is simple.
Moreover, the same theorem also asserts that a closed subgroup of $H$ is indeed a subproduct.
So a co-Lie subgroup of $H$ is of the form $\prod_{j \in J} S_j$ with $J$ a co-finite subset of $I$.
If $\kappa$ is infinite, then there are exactly $\kappa$ many co-finite subsets of $I$,
we have that $cL(G/C) = \kappa \leq \tau$.
And if $\kappa$ is finite, then $cL(G/C) < \omega \leq \tau$.

Now consider the abelian compact group $C$.
It is evident that if $C$ is infinite, then 
$$ cL(C) \leq w(C) \leq \tau $$ 
by Case \ref{caseabel}.
And if $C$ is finite, then $cL(C)$ is finite so less than $\tau$ as well.
In summary, we have that $cL(C) \leq \tau$ and in view of Lemma \ref{colie} we have 
$$ cL(G) \leq cL(C) \cdot cL(G/C) \leq \tau \cdot \tau = \tau. $$

\begin{case}\label{casetd}
$G$ is totally disconnected, i.e., $G$ is profinite.
\end{case}
In this case, a normal subgroup of $G$ is co-Lie if and only if it is open.
As is known, an infinite profinite group of weight $\tau$ has exactly $\tau$ many open subgroups \cite{RZ}, so we have that $cL(G) = \tau$.

Now let us go back to the general case.
Let $G_0$ be the identity component of $G$.
Then $G/G_0$ is profinite.
If $cL(G_0)$ is finite, then $cL(G/G_0)$ is infinite according to Lemma \ref{colie} and our assumption that $cL(G)$ is infinite.
Now, by Case \ref{casetd},
 $$cL(G)=cL(G/G_0)\leq w(G/G_0)=w(G/G_0)\leq w(G).$$
 Similarly, if $cL(G/G_0)$ is finite, i.e., $G_0$ is open in $G$, then $cL(G_0)$ is infinite.
 In view of Case \ref{casecon}, we have
 $$cL(G)=cL(G_0)\leq w(G_0)=w(G).$$
It remains to consider the case that both $cL(G_0)$ and $cL(G/G_0)$ are infinite.
The above two Cases \ref{casecon} and \ref{casetd} imply that $cL(G_0) \leq w(G_0)$ and $cL(G/G_0) \leq w(G/G_0)$.
By applying Lemma \ref{colie} again, we have 
$$ cL(G) \leq cL(G_0) \cdot cL(G/G_0) \leq w(G_0) \cdot w(G/G_0) \leq w(G). $$
Now the proof is complete.
\end{proof}

\begin{remark}
\begin{itemize}
\item[(a)] It is evident that if $cL(G)$ is infinite then we will obtain the equality $cL(G)=w(G)$ in the above lemma.
\item[(b)] It is known that for any infinite topological group $G$ with a closed normal subgroup $H$, we have that $w(G)=w(H)\cdot w(G/H)$.
So combining Lemma \ref{colie} and item (a) we get the equality $cL(G)=cL(H)\cdot cL(G/H)$ when $G$ is a compact group with infinite co-Lie character.
\end{itemize}
\end{remark}

\begin{lemma}
Let $G$ be a pseudocompact group with uncountable pseudocharacter and $\tau$ be an uncountable cardinal. Then $G$ is pseudo-$\tau$-fine if and only if for every closed subgroup $N$ of $G$, $\psi(G/N) \leq \tau$ only when $G/N$ is first countable and compact.
\end{lemma}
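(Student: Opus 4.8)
The plan is to establish the two implications separately. The ``if'' direction is immediate from Fact \ref{Fact1}, whereas the ``only if'' direction carries all the content and will be reduced to a single metrization claim for coset spaces of $G$.

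For the ``if'' direction, suppose that $\psi(G/N)\le\tau$ forces $G/N$ to be first countable and compact. To check pseudo-$\tau$-fineness through Fact \ref{Fact1}, let $M$ be any closed subgroup with $\psi(G/M)\le\tau$. By hypothesis $G/M$ is first countable, so $\chi(G/M)\le\omega$ and hence $\psi(G/M)\le\omega$; thus $M$ itself is the subgroup required by Fact \ref{Fact1}, and $G$ is pseudo-$\tau$-fine.

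For the ``only if'' direction, assume $G$ is pseudo-$\tau$-fine and fix a closed subgroup $N$ with $\psi(G/N)\le\tau$. As $\tau$ is uncountable, Fact \ref{Fact1} provides a closed subgroup $M\le N$ with $\psi(G/M)\le\omega$. I claim that \emph{any closed subgroup $M$ of the pseudocompact group $G$ with $\psi(G/M)\le\omega$ has $G/M$ compact and metrizable}. Granting this, the well-defined map $G/M\to G/N$, $gM\mapsto gN$, is continuous, open and surjective, so $G/N$ is a compact, second countable, Hausdorff space and therefore metrizable; in particular $G/N$ is first countable and compact, as wanted.

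Proving this claim is, I expect, the main obstacle. If $M$ is normal it is exactly Corollary \ref{pseumetr} applied to the pseudocompact group $G/M$, but in general $G/M$ is only a coset space, and I would instead argue through the completion, mirroring the proof of Corollary \ref{pseumetr}. Let $K$ be the Weil completion of $G$; by Fact \ref{CR2}, $K$ is compact and $G$ is $G_\delta$-dense in $K$. Put $\overline{M}=\mathrm{cl}_K M$, a compact subgroup with $\overline{M}\cap G=M$, and regard $G/M$ as a subspace of $K/\overline{M}$ via the topological embedding $gM\mapsto g\overline{M}$. This subspace is $G_\delta$-dense, since the preimage in $K$ of a nonempty $G_\delta$ set of $K/\overline{M}$ is a nonempty $G_\delta$ set of $K$ and hence meets $G$. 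Combining $\psi(G/M)\le\omega$ with this $G_\delta$-density and the Hausdorffness of $K/\overline{M}$ shows that $\{e\overline{M}\}$ is a $G_\delta$ set of $K/\overline{M}$; equivalently $\overline{M}=\bigcap_n U_n$ for some open neighbourhoods $U_n$ of $1_K$. Since the co-Lie (closed normal) subgroups of a compact group form a base at the identity, I may choose a co-Lie subgroup $L_n\subseteq U_n$ for each $n$; then $L=\bigcap_n L_n$ is a closed normal $G_\delta$ subgroup contained in $\overline{M}$ with $K/L$ metrizable (being a closed subgroup of the metrizable product $\prod_n K/L_n$). Hence $K/\overline{M}$, a continuous open image of $K/L$, is compact and metrizable. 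Finally, a $G_\delta$-dense subspace of a metrizable space is the whole space, so $G/M=K/\overline{M}$ is compact and metrizable, establishing the claim.
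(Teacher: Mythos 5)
Your proposal is correct, and its skeleton matches the paper's: pass to the Weil completion $K$, exploit $G_\delta$-density (Fact \ref{CR2}), reduce to a compact quotient of countable pseudocharacter, and push the conclusion down to $G/N$ along open continuous maps; like you, the paper treats the ``if'' direction as immediate from Fact \ref{Fact1} (it does not even write it out). The genuine difference is in the middle step. The paper never takes closures in $K$: from a closed $G_\delta$ subgroup $H\le N$ of $G$ it builds a closed $G_\delta$ subgroup $P$ of $K$ directly, via a nested sequence of symmetric neighbourhoods with $U_{n+1}U_{n+1}\subseteq U_n$ chosen inside open sets of $K$ tracing to $H$, so that $P\cap G\subseteq H$; it then shows that the restriction to $G$ of $K\to K/P$ is open and onto (the mechanism behind Fact \ref{pseudo}), giving $G/(P\cap G)\cong K/P$, and invokes the purely topological fact that a compact Hausdorff homogeneous space with a $G_\delta$ point is first countable. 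You instead work with $\overline{M}=\mathrm{cl}_K M$ for the subgroup $M$ supplied by Fact \ref{Fact1}, embed $G/M$ as a $G_\delta$-dense subspace of $K/\overline{M}$, and certify metrizability of $K/\overline{M}$ through the pro-Lie structure of compact groups (co-Lie subgroups forming a neighbourhood base at the identity), finishing with ``a $G_\delta$-dense subspace of a metrizable space is the whole space.'' Your route yields a slightly stronger intermediate statement --- $G/M$ itself is compact and metrizable, a coset-space version of Corollary \ref{pseumetr} --- and it recycles the co-Lie subgroups that the paper develops anyway for Lemma \ref{cl=w}; the cost is importing Peter--Weyl-type structure theory where the paper needs only the elementary equality of pseudocharacter and character on compacta. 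A small economy available to you: once you know $\overline{M}$ is a $G_\delta$ subgroup of the compact group $K$, the quotient $K/\overline{M}$ is a compact homogeneous space with a $G_\delta$ point, hence first countable, which suffices for the lemma and avoids the co-Lie detour entirely.
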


\begin{proof}
We denote by $K$ the Weil completion of $G$.
Then, $G$ is $G_\delta$ dense in $K$.
Let $N$ be a closed subgroup of $G$ such that $\psi(G/N) = \tau$.
If $G$ were pseudo-$\tau$-fine, then there would be a closed $G_\delta$ subgroup $H$ of $G$ such that $H \leq N$.
Take a sequence $\{U_n : n \in \omega\}$ of identity open neighborhoods in $K$ such that $U_n \subseteq V_n$, $U_n = U_n^{-1}$, and $U_{n+1}U_{n+1} \subseteq U_n$ for any $n \in \omega$.
Then $P := \bigcap_{n \in \omega} U_n$ is a closed $G_\delta$ subgroup of $K$ satisfying that $P' := P \cap G \subseteq H$.
Let $\pi: K \to K/P$.
By $G_\delta$ denseness of $G$ in $K$, we then conclude that the restriction of $\pi$ to $G$ remains open and $\pi(G) = K/P$.
That is saying that $G/P' \cong K/P$.
Since the singleton $\{\pi(1_K)\}$ is also a $G_\delta$ set in $K/P$, we know that the compact space $K/P$ is first countable.
As an open continuous image of $G/P'$, $G/N$ should also be first countable and compact.
\end{proof}

We are now going to prove the final result of this section.

\begin{theorem}
Let $\tau$ be an infinite cardinal. Then a pseudocompact group $G$ cannot be pseudo-$\tau$-fine whenever $G$ is not metrizable.
\end{theorem}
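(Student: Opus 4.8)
The plan is to establish the contrapositive in its sharpest form: a pseudocompact group $G$ that is not metrizable already fails to be pseudo-$\omega_1$-fine. This suffices for every uncountable $\tau$, since every $G_{\omega_1}$-set is a $G_\tau$-set, so pseudo-$\tau$-fineness with $\tau\geq\omega_1$ implies pseudo-$\omega_1$-fineness (for $\tau=\omega$ the notion is vacuous, so the statement is read for uncountable $\tau$, consistently with the abstract). First I would record the easy reductions. Since $G$ is precompact and not metrizable, $w(G)=\chi(G)>\omega$, and by Corollary \ref{pseumetr} the singleton $\{1_G\}$ is not a $G_\delta$-set; hence any intersection of countably many $G_\delta$-subgroups of $G$ is nontrivial. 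By Fact \ref{Fact1}, to see that $G$ is not pseudo-$\omega_1$-fine it is enough to exhibit a closed normal subgroup $N$ with $\omega<\psi(G/N)\leq\omega_1$: such an $N$ can contain no closed subgroup $N'$ with $\psi(G/N')\leq\omega$, for then $G/N'$ would be compact metrizable by Corollary \ref{pseumetr} and its quotient $G/N$ metrizable, contradicting $\psi(G/N)>\omega$.

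The construction of $N$ is the heart of the matter, and I would carry it out with co-Lie subgroups. Each co-Lie subgroup $L$ is normal, and $G/L$ is a pseudocompact Lie group, hence compact and metrizable, so $L$ is a $G_\delta$-subgroup of $G$. Passing to the Weil completion $K$ and using the correspondence between co-Lie subgroups of $G$ and of $K$ from the proof of Lemma \ref{cl=w}, together with the fact that a compact group is separated by its Lie quotients, one obtains that the co-Lie subgroups of $G$ have trivial intersection, i.e.\ they separate points. I would then build, by transfinite recursion on $\beta<\omega_1$, co-Lie subgroups $N_\beta$ so that the partial intersections $N_{<\beta}:=\bigcap_{\alpha<\beta}N_\alpha$ strictly decrease: at stage $\beta$ the group $N_{<\beta}$ is a countable intersection of $G_\delta$-subgroups, hence a nontrivial $G_\delta$-subgroup, so I may pick $x\in N_{<\beta}\setminus\{1_G\}$ and, by separation, a co-Lie subgroup $N_\beta$ with $x\notin N_\beta$; then $N_{<\beta+1}=N_{<\beta}\cap N_\beta\subsetneq N_{<\beta}$. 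Putting $N:=\bigcap_{\alpha<\omega_1}N_\alpha$, we get $\psi(G/N)\leq\omega_1$ at once, since $N$ is an intersection of $\omega_1$ many $G_\delta$-subgroups.

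The remaining point, $\psi(G/N)>\omega$, is the step I expect to be the main obstacle, and it is where the $\omega_1$-length of the chain is converted into genuine non-metrizability. Suppose instead $\psi(G/N)\leq\omega$. Then $G/N$ is pseudocompact with countable pseudocharacter, so Corollary \ref{pseumetr} makes it compact and metrizable, hence second countable. The subgroups $N_{<\beta}/N$ for $\beta<\omega_1$ are closed in $G/N$ and, because the $N_{<\beta}$ strictly decrease while all contain $N$, they form a strictly decreasing $\omega_1$-chain of closed subsets; their complements form a strictly increasing $\omega_1$-chain of open subsets. In a second countable space this is impossible: the union of the chain is an open, hence Lindel\"of, subspace, so the chain admits a countable subcover and therefore stabilizes before $\omega_1$, a contradiction. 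Thus $\omega<\psi(G/N)\leq\omega_1$, and by the first paragraph $G$ is not pseudo-$\omega_1$-fine, completing the argument. The two delicate points to verify with care are the separation property of the co-Lie subgroups of $G$ (which I would derive from the completion via Lemma \ref{cl=w}) and this hereditary-Lindel\"of obstruction to uncountable strictly monotone chains of closed sets.
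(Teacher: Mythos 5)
Your construction of the witness subgroup $N$ is sound, but the reduction in your first paragraph has a genuine gap. Fact \ref{Fact1} quantifies over \emph{all} closed subgroups: to conclude that $G$ is not pseudo-$\omega_1$-fine you must show that $N$ contains no closed subgroup $N'$ with $\psi(G/N')\leq\omega$, where $G/N'$ is the \emph{left coset space} and $N'$ need not be normal. The canonical subgroups one extracts inside a $G_\delta$-set (of the form $\bigcap_n U_n$ with $U_n=U_n^{-1}$ and $U_{n+1}U_{n+1}\subseteq U_n$) are in general not normal, so the non-normal case is precisely the one that arises. For such $N'$ the space $G/N'$ is merely a homogeneous space, not a topological group, and Corollary \ref{pseumetr}---a statement about pseudocompact \emph{groups}, proved via the Weil completion and Fact \ref{pseudo}---cannot be applied to it; hence the sentence ``such an $N$ can contain no closed subgroup $N'$ with $\psi(G/N')\leq\omega$'' is unjustified as written. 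The gap is local and fixable in two ways. Either invoke, in place of Fact \ref{Fact1}, the lemma immediately preceding this theorem in the paper (pseudo-$\tau$-fineness of a pseudocompact group forces every coset space $G/M$ with $\psi(G/M)\leq\tau$ to be first countable and compact); its proof is engineered around exactly this issue, using only compactness and first countability, which survive open continuous maps, and with it $\omega<\psi(G/N)\leq\omega_1$ at once contradicts first countability of $G/N$. Or repair the step by hand in the completion $K$: the open sets of $G$ witnessing the relevant $G_\delta$-set extend to open sets $O_n$ of $K$; inside $\bigcap_n O_n$ one can place a closed \emph{normal} $G_\delta$-subgroup $P$ of $K$ (compact groups have conjugation-invariant basic neighborhoods), and then $P\cap G\subseteq N$ is normal in $G$ with $G/(P\cap G)\cong K/P$ compact metrizable, making the quotient group $G/N$ metrizable---the desired contradiction.

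Modulo this repair, your proof is correct and takes a genuinely more elementary route than the paper's. The paper applies Lemma \ref{cl=w} twice: once to produce $\omega_1$ distinct co-Lie subgroups from $w(G)>\omega$, and once---through the hard inequality $cL(G/N)\leq w(G/N)$, whose proof requires Pontryagin duality, the structure theorem for connected compact groups, and profinite theory---to conclude that $G/N$ is not first countable. You replace both applications: Peter--Weyl separation together with the $G$--$K$ co-Lie correspondence and a transfinite recursion yields a strictly decreasing $\omega_1$-chain (using $\psi(G)>\omega$ to keep the countable partial intersections nontrivial), and the hereditarily Lindel\"of obstruction to strictly decreasing $\omega_1$-chains of closed sets replaces the second use of Lemma \ref{cl=w}. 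Both substitutes check out, and this is a real simplification for this particular theorem. One last point of hygiene: justify $\psi(G/N)\leq\omega_1$ as the paper does, via the injective continuous homomorphism $G/N\to\prod_{\alpha<\omega_1}G/N_\alpha$ with each $G/N_\alpha$ compact metrizable; the phrase ``$N$ is an intersection of $\omega_1$ many $G_\delta$-subgroups'' is not by itself a proof (for non-normal $G_\delta$-subgroups this is the same subtlety as above), though it is harmless here because your $N_\alpha$ are normal with metrizable quotients.
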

\begin{proof}
Since $G$ is not metrizable, $w(G)$ is uncountable.
By Lemma \ref{cl=w}, $G$ has a family $\{N_\alpha : \alpha < \omega_1\}$ of pairwise distinct co-Lie subgroups.
Let $N = \bigcap_{\alpha < \omega_1} N_\alpha$.
Then $G/N$ admits an injective continuous homomorphism into $\prod_{\alpha < \omega_1} G/N_\alpha$; so $\psi(G/N) \leq \omega_1$.

To see that $G$ is not pseudo-$\tau$-fine, we need only to check that $G$ is not pseudo-$\omega_1$-fine.
And in view of the above lemma, it suffices to prove that $G/N$ is not first countable.
Indeed, since those $N_\alpha$ are pairwise disjoint and $N \subseteq N_\alpha$ for any $\alpha < \omega_1$, we have that $\{N_\alpha / N : \alpha \in \omega_1\}$ are pairwise disjoint co-Lie subgroups of $G/N$.
So according to Lemma \ref{cl=w}, $\chi(G/N) = w(G/N)$ is uncountable.
\end{proof}

\section*{Acknowledgements}
The authors would like to acknowledge the support by NSFC grants NO. 12301089 and 12271258, and the Natural Science Foundation of the Jiangsu Higher Education
Institutions of China (Grant NO. 23KJB110017).

\end{document}